\numberwithin{equation}{section}
\theoremstyle{theorem}
\newtheorem{Lem}{Lemma}
\newtheorem{theorem}{Theorem}
\newtheorem{prop}[theorem]{Proposition}
\newtheorem{remark}{Remark}
\newcommand{\bitem}{\begin{itemize}}
	\newcommand{\eitem}{\end{itemize}}
\newcommand{\R}{\mathbb{R}}
\newcommand{\bpm}{\begin{pmatrix}}
	\newcommand{\epm}{\end{pmatrix}}
\newcommand{\TV}{{\rm TV}}
\newcommand{\bq}{\begin{equation}}
\newcommand{\eq}{\end{equation}}
\newcommand{\D}{\mathrm{div} \;}
\let\abs=\envert
\let\norm=\enVert
\let\inprod=\inProd
\title{\bf Efficient ADMM and Splitting Methods for Continuous Min-cut and Max-flow Problems }
\author{Hongpeng Sun\thanks{Institute for Mathematical Sciences,
		Renmin University of China, 100872 Beijing, People's Republic of China.
		Email: \href{mailto:hpsun@amss.ac.cn}{hpsun@amss.ac.cn}.}
	   \and Xuecheng Tai\thanks{Department of Mathematics, Hong Kong Baptist University,
    			Kowloon Tong, Hong Kong. Email: \href{mailto:xuechengtai@hkbu.edu.hk}{xuechengtai@hkbu.edu.hk}}.
       \and Jing Yuan\thanks{School of Mathematics and Statistics, Xidian University. Email: \href{mailto:jyuan@xidian.edu.cn}{jyuan@xidian.edu.cn}}. }
\begin{document}

	\maketitle
	
	\begin{abstract}
		The Potts model has many applications. It is equivalent to some min-cut and max-flow models. Primal-dual algorithms have been used to solve these problems. Due to the special structure of the models, convergence proof is still a difficult problem. In this work, we developed two novel, preconditioned, and over-relaxed alternating direction methods of multipliers (ADMM) with convergence guarantee for these models.  Using the proposed preconditioners or block preconditioners, we get accelerations with the over-relaxation variants of preconditioned ADMM. The preconditioned and over-relaxed Douglas-Rachford splitting methods are also considered for the Potts model.   Our framework can handle both the two-labeling or multi-labeling problems with appropriate block preconditioners based on  Eckstein-Bertsekas and Fortin-Glowinski splitting techniques.  
	
		% \PACS{PACS code1 \and PACS code2 \and more}
		% \subclass{MSC code1 \and MSC code2 \and more}
	\end{abstract}

\noindent \textbf{Keywords.}
	image segmentation, block preconditioner, ADMM, Douglas-Rachford splitting, over-relaxation

	\section{Introduction}
	\label{intro}
	
	During recent twenty years, convex optimization was successfully introduced as a powerful tool to image processing, computer vision and machine learning, which is mainly credited to the pioneering works from both theoretical and algorithmic studies \cite{Nesterov2005Smooth,citeulike3001108,Beck2009A,ChambolleP11,He2002A,Nikolova2006,Yuan2010}, along with vast applications,
	%\cite{rudin1992nonlinear,yuan2007discrete,iwr_08,zhang2010analysis,goldstein2014fast,punithakumar2012convex}, 
	for examples, the total-variation-based image denoising~\cite{rudin1992nonlinear,goldstein2014fast}, image segmentation \cite{Nikolova2006,iwr_08,ChambolleP11,Yuan2010}, the sparsity-based image reconstruction~\cite{Beck2009A,zhang2010analysis}, 
	and total-variation-based motion estimation \cite{yuan2007discrete} etc.
	
	The basic convex optimization theory related to these applications aims to minimize a
	finite sum of convex function terms:
	\bq \label{eq:pmodel}
	\min_u \;\; f_1(u) \, + \, \ldots \, + \, f_n(u) \; ,
	\eq
	where it also models the convex constrained optimization problem as its special case, such that 
	the convex constraint set $C$ on the variables $u(x) \in C$ %, 
	% e.g. $u(x) \in \C$ where $\C$ is some convex set, 
	can be rewritten by adding the associate characteristic function 
	%\[
	%\chi_{C}(u) \, := \, \left\{ \begin{array}{ll}  0 \, , &
	%\text{$x \in C$} \\ +\infty \,  , & \text{$x\in C$} \end{array} \right. \, .
	%\]
	into \eqref{eq:pmodel}.
	
	Given the very high dimension of the solution $u$ of many applications, the iterative first-order optimization schemes, which essentially make use of the first-order gradient information, play the central role in building up practical algorithmic implementations with an affordable computational cost per iteration.
	In this perspective, the conjugate or duality form of each convex function term in \eqref{eq:pmodel}
	%$f_i(u) =
	%\inprod{u,p_i} -f_i^*(p_i)$ 
	\bq \label{eq:conj}
	f_i(u) \, = \, \max_{p_i} \, \inprod{u,p_i} -f_i^*(p_i)
	\eq
	provides one of the most powerful tool in both analyzing and developing such first-order 
	iterative algorithms, for which the introduced new dual variable $p_i$ for each
	functional term $f_i$ just represents the first-order gradient of $f_i(u)$
	implicitly. By simple computations, this brings two equivalent optimization models to the studied convex minimization problem \eqref{eq:pmodel}, a.k.a.
	the \emph{primal-dual model}:
	\bq \label{eq:pdmodel}
	\min_u \max_p \; \; \underbrace{\inprod{p_1 + \ldots + p_n, u} \, - \,
		f_1^*(p_1) \, - \, \ldots \, - \, f_n^*(p_n)}_{\text{Lagrangian function } L(u,
		p)},\; \ \ p: =(p_1, \cdots, p_n),
	\eq
	and the \emph{dual model} 
	\bq \label{eq:dmodel}
	\left. \begin{array}{ll} \max_p \; \; &\,  - \, f_1^*(p_1) \, - \, \ldots
		\, - \, f_n^*(p_n)\,, 
		\\ \text{s.t.  } \,  & \, p_1 + \ldots + p_n \, = \, 0.
	\end{array}
	\right. \, 
	\eq 
	
	Actually, for each convex function $f_i(u)$, the optimum of $p_i$ for its dual expression \eqref{eq:conj} is nothing but
	its corresponding gradient or subgradient at $u$; therefore, the linear equality constraint $p_1 + .. + p_n  = 0$ for the dual model
	\eqref{eq:dmodel} exactly represents the first-order optimal condition to the studied convex optimization problem \eqref{eq:pmodel}, i.e.,
	\[
	0 \in \partial f_1(u) \, + \,  \ldots \, + \, \partial f_n(u).
	\]
	In addition, for the dual optimization problem \eqref{eq:dmodel}, the optimum multiplier $u^*$ to its linear equality constraint $p_1 + .. + p_n  = 0$ is just the minimum of the original convex optimization problem \eqref{eq:pmodel}, which can be easily seen by the formulation \eqref{eq:pdmodel}.
	
	Especially, each term $f_i^*(p_i)$, $i=1 \ldots n$, in the energy functional of 
	the dual model \eqref{eq:dmodel} solely depends on an independent variable $p_i$ which is loosely correlated to the
	other variables by the linear equality constraint $p_1 + p_2\cdots+ p_n  = 0$. This is in contrast to
	its original optimization model \eqref{eq:pmodel} whose energy functional terms are interacted with each with the
	common unknown variable $u$. This provides a big advantage in develop splitting optimization algorithms,
	to tackle the underlying convex optimization problem, particularly at a large scale. For instance, 
	the classical augmented Lagrangian method (ALM)~\cite{citeulike1859441,Rockafellar1976} provides an optimization framework to develop the corresponding algorithmic scheme for the linearly
	constrained dual model \eqref{eq:dmodel}, which involves two
	sequential steps at each iteration:
	\begin{align} \label{eq:ALM}
	p^{k+1} \, := & \, \arg\max_p \, L(u^k,p) \, - \, \frac{c^k}{2}\norm{p_1
		+ \ldots + p_n}^2 \, , \\
	u^{k+1} \, = & \, u^k \, -
	\, c^k(p_1^{k+1} + \ldots + p_n^{k+1}) \, ,
	\end{align}
	where the positive parameter $c^k > 0$ is the associate step-size. 
	
	In this work, we focus on developing novel efficient convex optimization methods, based on primal-dual optimization theory, to image segmentation, As one of the most fundamental problems of image processing and computer vision, a lot of contributions were dedicated into image segmentation during last three decades. Despite big progresses upon current convolution neural networks (CNNs), whose results heavily rely on both the quantity and quality of training data, one of the most successful and popular mathematical models for image segmentation is firmly rooted in the theory of min-cut/max-flow, which was originally developed on account of Markov random fields (MRF), and a series efficient and robust solvers have been developed based on graph-cuts \cite{Boykov01fastapproximate}.
	%  to the formulated submodular combinatorial optimization problems. 
	%%Such graph-cut-based algorithms were 
	%%including the efficient and popular max-flow and min-cut algortihm. Graph cut was developed in \cite{BVZ} for binary labeling problem based the Markov random fields, which mainly builds on combination and integer optimizations. Although the corresponding discrete energy is non-convex, the graph cut algorithms are robust for submodular energy.   
	%Such min-cut-based image segmentation model was rewritten and studied in the spatially continuous setting, where its MRF-based model boiled down to the classical total-variation regularized energy function and its challenging combinatorial constraint on the labelling function $u(x) \in \{0,1\}$ to $u(x) \in [0,1]$ can be relaxed and solved efficiently by its convex optimization version~\cite{Nikolova2006,Yuan2010}:
	%\bq \label{eq:minc}
	%\min_{u(x) \in [0,1]} \;\; \int u(x) \, C(x)\, dx \, + \,  \alpha \int \abs{\nabla u} \, dx \, .
	%\eq
	%%where its binary constraint is relaxed as, hence results in a convex optimization problem \cite{Nikolova2006}. 
	%
	In \cite{BYT,Yuan2010,ybtb2014}, continuous max-flow and min-cut problem is considered. It is shown that the Potts model is equivalent to a continuous min-cut and max-flow problem. If one discrete these models with specific approximations and the so-called "length" term, they reduce to existing graph-cut models. However, they can also produce some discrete models that are not submodulus and can be solved with fast algorithms.   Augmented Lagrangian method was used  to solve these problems \cite{ybtb2014}. For augmented Lagrangian methods, one needs to solve all the dual variables simultaneously followed by updating the Lagrangian multipliers, which is difficult for practical applications. Thus, alternating direction method of multipliers (ADMM) are actually employed in \cite{Yuan2010,ybtb2014} for solving the dual problem. However, although the dual variables can be solved with ADMM consecutively, which are more convenient than augmented Lagrangian method, the convergence can not be guaranteed due to the three dual variables. It is recently found that ADMM iterations can be divergent if there are more than two block of variables \cite{CBHY}, which is the case in \cite{Yuan2010,ybtb2014} unfortunately.

	%\subsection{Contributions and Organization}
	Our contributions are two-fold.  We first propose two convergent ADMM methods to solve these models. The first step is regrouping the dual variables into two big blocks. However, some multiple variables are still coupled with each other and the subproblems are still hard to solve.  In order to solve the nonlinear subproblems, we proposed delicate preconditioners for easy computations of the subproblems. Various efficient preconditioners or block-preconditioners are proposed for both the two-phase or multi-phase problems. 
	To the best knowledge of the authors, the ADMM proposed in this paper are the first convergent variants of ADMM for continuous max-flow problems in the literature.
	
	Our second contribution is that we can get accelerations with some over-relaxed variants of the ADMM with preconditioners. It is shown that for various optimization and regularization problems including the image denoising problems, over-relaxation can gain accelerations \cite{DY,LST,SUN}. There are two kinds of over-relaxations for ADMM including the over-relaxation originated from \cite{DY,FG,LST} and the over-relaxation from \cite{EP}. Both kinds of over-relaxations are considered for two-phases and multi-phase  problems. As shown in the numerical parts, good accelerations can be obtained with our over-relaxation and preconditioning technique. We also considered the preconditioned Douglas-Rachford splitting algorithm for comparison.
	
	The paper is organized as follows. In section \ref{sec:intro}, we give an introduction of the min-cut primal approach and max-flow dual approach for both two-phase and multi-phase problems. We also give a brief introduction of some existing algorithms.  In section \ref{sec:two:labels}, we focus on two novel variants of convergent relaxed and preconditioned ADMM, along with the
	classical ADMM in \cite{BYT,YBTB10,ybtb2014} for both two-phase and multi-phase image  segmentation problems.
	Preconditioners or block-preconditioners are developed for the corresponding algorithms.
	The relaxed and preconditioned Douglas-Rachford splitting method for the saddle-point approach is also discussed.  In section \ref{sec:num:experi}, we present some detailed numerical experiments to show the efficiency of the proposed algorithms. In the section \ref{sec:conclusion}, we give a some conclusions and discussions.

	\section{Preliminary Theories}
	\label{sec:intro}
	
	%\subsection{Dual Optimization Theory of Image Segmentation}
	
	%Recently, the mathematically equivalent models in terms of primal-dual and dual were introduced as a powerful tool to develop efficient algorithms for solving its associate convex optimization problem \eqref{eq:pmodel}.  

	\subsection{Primal and Dual Models with continuous min-cut and max-flow settings}
	
	In this study, we focus on the classical convex optimization models to image segmentation, which are essentially proposed and formulated based on the theory of Markov random fields (MRF).
	% i.e. the min-cut model for segmenting the given into two regions of foreground and background, and Potts model for multiphase image segmentation.  
	During the last decades, the min-cut model was becoming  one of the most successful models for 
	foreground-background image segmentation~\cite{Boykov01anexperimental,Boykov01fastapproximate}, which has been well studied in the discrete 
	graph setting and can be efficiently solved by the scheme for max-flow problems. In fact, such min-cut model can be
	also formulated in a spatially continuous setting, i.e.
	the \emph{spatially continuous min-cut problem}~\cite{Nikolova2006}:
	\bq \label{seg:chanvese:bi}
	\min_{u(x) \in \{0,1\}} \;
	\int_{\Omega} \big\{ \big(1-u \big)C_t  + u \, C_s \big\}(x) \, dx + \alpha \int_{\Omega}\abs{\nabla u}\, dx \, ,
	\eq
	where $C_s(x)$ and $C_t(x)$ are the cost functionals such that, for each pixel $x \in \Omega$, $C_s(x)$ and $C_t(x)$ give the costs to label $x$ as 'foreground' and 'background' respectively. The optimum $u^*(x)$ to the combinatorial optimization
	problem~\eqref{seg:chanvese:bi} defines the optimal foreground segmentation region $S$ such that $u^*(x)=1$ for any $x \in S$, and the background segmentation region $\Omega\backslash S$ otherwise. 
	
	Chan et al.~\cite{Nikolova2006} proved that the challenging non-convex combinatorial optimization problem~\eqref{seg:chanvese:bi}
	can be solved globally by computing its convex relaxation model, i.e. the \emph{convex relaxed min-cut model}:
	\bq \label{seg:chanvese}
	\min_{u(x) \in [0,1]} \;
	\int_{\Omega} \big\{ \big(1-u \big)C_s  + u \, C_t \big\} \, dx + \alpha \int_{\Omega}\abs{\nabla u}\, dx \, ,
	\eq
	while thresholding the optimum of \eqref{seg:chanvese} with any parameter $\beta \in (0,1)$. Hence, the difficult combinatorial optimization problem \eqref{seg:chanvese:bi} can be exactly solved by a convex minimization problem \eqref{seg:chanvese} instead.
	Particularly, Yuan et al.~\cite{Yuan2010,ybtb2014} observed that the convex relaxed min-cut model \eqref{seg:chanvese} can be equivalently reformulated
	by its dual model, i.e. the \emph{continuous max-flow model}:
	\begin{align}
	\max_{p_s, p_t, q} \; & \int_{\Omega} p_s(x) \, dx \,  \label{eq:two-label-mf} \\
	\textbf{s.t.} \;\;& \abs{q(x)}   \leq \alpha \, ,  \quad 
	p_s(x) \leq  C_s(x) \, , \quad
	p_t(x) \leq  C_t(x) \,  ;  \label{eq:mpf-two-cond-01}\\
	& \Big(\D q - p_t + p_s \Big)(x)
	\, = \, 0 \,  \,  . \label{eq:mpf-two-cond-03}
	\end{align}
	
	For multiphase image segmentation, \emph{Potts model} is used as the basis to formulate the associate 
	mathematical model~\cite{Boykov01fastapproximate,Boykov01anexperimental} by minimizing the following energy function
	\bq \label{eq:potts-model} 
	\min_{u} \, \sum_{i=1}^n
	\int_{\Omega} u_i(x) \, \rho(l_i, x) \, dx \, + \, \alpha
	\sum_{i=1}^n \int_{\Omega} \abs{\nabla u_i} \, dx \, 
	\eq
	subject to
	\bq \label{eq:potts-bconst}
	\sum_{i=1}^n \, u_i(x) \, = \, 1 \, , \quad { u_i(x) \in
		\{0,1\}} \,, \; i = 1 \ldots n\,  , \quad \forall x \in \Omega \, ,
	\eq
	where $\rho(l_i, x)$, $i=1 \ldots n$, are the cost functionals: for each pixel $x \in \Omega$, $\rho(l_i, x)$ gives
	the cost to label $x$ as the segmentation region $i$. Potts model seeks the optimum labeling function $u_i^*(x)$, $i=1 \ldots n$, 
	to the combinatorial optimization
	problem~\eqref{eq:potts-model}, which defines the segmentation region $\Omega_i$ such that $u_i^*(x)=1$ for any $x \in \Omega_i$.
	Clearly, the linear equality constraint $u_1(x) + \ldots + u_n(x) = 1$ states that each pixel $x$ belongs to a single segmentation region. 
	
	Similar as the convex relaxed min-cut model \eqref{seg:chanvese}, 
	we can relax each binary constraint $u_i(x) \in \{0,1\}$ in \eqref{eq:potts-bconst} to 
	the convex set $u_i(x) \in [0,1]$, then formulate the
	convex relaxed optimization problem of Potts model \eqref{eq:potts-model} as
	\bq \label{eq:relaxed-potts} 
	\min_{u \in S} \, \sum_{i=1}^n
	\int_{\Omega} u_i(x) \, \rho(l_i, x) \, dx \, + \, \alpha
	\sum_{i=1}^n \int_{\Omega} \abs{\nabla u_i} \, dx \, 
	\eq
	where
	\bq \label{eq:pw-simplex}
	S \, = \, \{u(x) \,|\, (u_1(x) , \ldots, u_n(x)) \, \in \,
	\triangle_n^{+} \, , \; \forall x \in \Omega \, \} \, .
	\eq
	$\triangle_n^{+}$ is the simplex set in the space $\R^n$. 
	
	%For the multiphase image segmentation model, i.e. Pott model, its convex relaxed optimization model is
	%\bq \label{eq:relaxed-potts} 
	%\min_{u \in S} \, \sum_{i=1}^n
	%\int_{\Omega} u_i(x) \, \rho(l_i, x) \, dx \, + \, \alpha
	%\sum_{i=1}^n \int_{\Omega} \abs{\nabla u_i} \, dx \,,
	%\eq
	%where
	%\bq \label{eq:pw-simplex}
	%S \, = \, \{u(x) \,|\, (u_1(x) , \ldots, u_n(x)) \, \in \,
	%\triangle_n^{+} \, , \; \forall x \in \Omega \, \} \, .
	%\eq
	%$\triangle_n^{+}$ is the simplex set in the space $\R^n$. 
	%Still by Fenchel duality theory 
	%\cite{KK}, we have its equivalent primal-dual formulation
	%\begin{align} \label{eq:potts-pd}
	%\max_{p_s, p,q}\min_u\; & \int_{\Omega} \big\{(1 -
	%\sum_{i=1}^n u_i)\, p_s  \, + \, \sum_{i=1}^n u_i \, p_i \, + \, 
	%%\nonumber 
	%\sum_{i=1}^n u_i \,\, \D q_i \big\} \, dx, \\
	%\text{s.t. }\quad & \,  p_i(x) \, \leq \, \rho(\ell_i,x) \, , \quad
	%\abs{q_i(x)} \, \leq \, \alpha \, ; \quad i=1 \ldots n \,  \nonumber
	%\end{align}
	
	Through variational analysis (c.f.~\cite{YBTB10}), it was proven that the  dual formulation, 
	i.e. the following \emph{continuous max-flow model},  is equivalent to the convex relaxed Potts model~\eqref{eq:relaxed-potts}:
	%Minimizing the energy function of \eqref{eq:potts-pd} over the free variable $u$, 
	%we obtain obtain its equivalent dual formulation, i.e. the \emph{continuous max-flow model},
	\bq \label{eq:potts-mf}
	\max_{p_s, p, q} \;  \int_{\Omega} p_s \,
	dx,  
	\eq
	subject to
	\bq
	\label{eq:mpf-cond-01} \abs{q_i(x)} \, \leq \, \alpha \, , \quad
	p_i(x) \, \leq \, \rho(\ell_i,x) \, , \quad i=1 \ldots n \, ; \eq
	% \bq \label{eq:mpf-cond-02}
	% p_i(x) \, \leq \, \rho(\ell_i,x) \, , \quad
	% i = 1, \ldots, n \, ;
	% \eq
	\bq \label{eq:mpf-cond-03} \big( \D q_i - p_s + p_i \big)(x) \,
	= \, 0 \, , \quad i=1,\ldots,n \, . 
	\eq 
	
	The linear equality constraints \eqref{eq:mpf-two-cond-03} and \eqref{eq:mpf-cond-03} just correspond to the classical flow conservation conditions of the max-flow models \eqref{eq:two-label-mf} and \eqref{eq:potts-mf} respectively.
	
	\subsection{ALM-Based Splitting Algorithms}
	In this section, we will give a brief review of the classical ALM-based algorithms for solving both foreground-background and multiphase image segmentation problems \cite{Yuan2010,ybtb2014,YBTB10}.  These were developed under the perspective of dual formulations \eqref{eq:two-label-mf} and \eqref{eq:potts-mf}. 
	For example, the dual formulation of \eqref{eq:two-label-mf} is as follows: 
	\begin{equation}\label{eq:seg:dual}
	\max_{p_s, p_t, q}\; \langle 1, p_s \rangle  -{I}_{\{p_s \leq C_s\}}(p_s) - {I}_{\{p_t \leq C_t\}}(p_t) - {I}_{\{\|q\|_{\infty} \leq \alpha\}}(q),
	\end{equation}
	subject to the linear equality constraint \eqref{eq:mpf-two-cond-03}, i.e. the flow-conservation condition.
	%\begin{equation}\label{eq:constraint:alm}
	%p_t - p_s + \D q \,= \,0 \, ,
	%\end{equation}
	The convex set constraints of \eqref{eq:mpf-two-cond-01} are encoded in the energy functional of \eqref{eq:seg:dual} by their associate indicator functions ${I}_{\{p_s \leq C_s\}}(p_s)$, ${I}_{\{p_t \leq C_t\}}(p_t) $ and ${I}_{\{\|q\|_{\infty} \leq \alpha\}}(q)$ respectively. 
	Given its corresponding augmented Lagrangian functional:
	\begin{align}
	L_c(u, p_s, p_t, q) =& \langle 1, p_s \rangle  -{I}_{\{p_s \leq C_s\}}(p_s) - {I}_{\{p_t \leq C_t\}}(p_t) - {I}_{\{\|q\|_{\infty} \leq \alpha\}}(q) \notag \\
	&+ \langle u, p_t - p_s + \D q\rangle - \frac{c}{2}\|p_t - p_s + \D q\|_{2}^2 \, , \label{eq:augfunc:twolabel}
	\end{align}
	in view of the typical ALM scheme \eqref{eq:ALM}, optimizing all the dual variables, e.g. $(p_s, p_t, q)$ for \eqref{eq:two-label-mf} and $(p_s, p_i, q_i)$ for \eqref{eq:potts-mf}, simultaneously at each iteration is impractical. Henceforth, the parameter $c$ denotes the step size of ALM scheme. 
	ADMM, by optimizing each dual variable block sequentially, is thus employed in \cite{Yuan2010,ybtb2014,YBTB10} for actual implementations.
	However, it turns out that solving the nonlinear subproblem involving with $q$ is still challenging, i.e.,
	\[
	\left(\partial {I}_{\{\|q\|_{\infty} \leq \alpha\}}(\cdot)  + c \D^* \D \right)^{-1}.
	\]
	In \cite{Yuan2010,ybtb2014,YBTB10}, the one-step projection is introduced for the update of $q^{k+1}$ as follows along with the updates of all the remaining variables, i.e., 
	%we then obtain a classical multi-block ADMM-type algorithm as follows:
	\begin{equation}\label{eq:ADMM:Tai_yuan} 
	\begin{cases}
	q^{k+1} = \mathcal{P}_{\alpha}\left( (I-\frac{1}{a}\nabla \nabla ^*)q^k  + \frac{1}{a} \nabla(p^k_t -p^k_s -\frac{u^k}{c})\right), \\\
	p_{s}^{k+1} = \mathcal{P}_{C_s}(p_{t}^k + \D q^{k+1} -\frac{u^k}{c} + \frac{1}{c}),  \\%\tag{pADMM-TY} \\
	p_{t}^{k+1} = \mathcal{P}_{C_t}(p_{s}^{k+1} - \D q^{k+1}+ \frac{u^k}{c}), \\
	u^k = u^k - c(p_t^{k+1} - p_s^{k+1} + \D q^{k+1}) \, ,
	\end{cases}
	\end{equation}
	where the projections $\mathcal{P}_{\alpha}$, $\mathcal{P}_{C_s}$ and $ \mathcal{P}_{C_t}$ are as follows:
	%,  to their respective convex sets $\{q: \|q\|_{\infty} \leq \alpha \}$, $\{p_s: p_s \leq C_s\}$ and $\{p_t: p_t \leq C_t\}$, i.e. 
	\begin{equation}\label{eq:projections}
	\mathcal{P}_{\alpha}(\tilde q) = \frac{\tilde q}{\max(1.0, \frac{|\tilde q|}{\alpha})},\ \ \mathcal{P}_{C_s}(\tilde p_s) = \min(\tilde p_s, C_s), \ \  \mathcal{P}_{C_t}(\tilde p_t) = \min(\tilde p_t, C_t) \,.
	\end{equation}
	%\[
	%acI - c\nabla \nabla ^* \geq 0 \rightarrow 1/a  \leq 1/8 \Leftrightarrow a \geq 8,
	%\]
	%and 
	However, the convergence of \eqref{eq:ADMM:Tai_yuan} is not guaranteed, where it is recently discovered that such an ADMM iteration could be divergent for optimizing more than two blocks of variables consecutively \cite{CBHY}, which is exactly the cases studied in \cite{Yuan2010,ybtb2014,YBTB10} unfortunately. This motivates us to design convergent and more efficient ADMM-based method.

	\section{Novel ADMM-Based Optimization Methods}
	\label{sec:two:labels}

	In this section, we propose two novel convergent ADMM types of optimization algorithms, i.e. Eckstein-Bertsekas-type and Fortin-Glowinski-type for solving the studied image segmentation problems in terms of \eqref{eq:two-label-mf} and \eqref{eq:potts-mf}. Particularly, our experiments show the derived algorithms both outperform the classical ALM-based algorithms introduced in \cite{Yuan2010,ybtb2014,YBTB10}.
	
	\subsection{Novel ADMMs for Foreground-Background Image Segmentation}

	\subsubsection{Relaxed and Preconditioned ADMM of Eckstein-Bertsekas Type}
	
	Let the new variable $p$ denote the variable blocks $(p_s, p_t)$. We can equivalently 
	generalize the dual model \eqref{eq:two-label-mf} of foreground-background image segmentation as the optimization of two variable blocks $(p,q)$ such that
	\bq \label{eq:two-label-mf:twoblocks}
	\max_{p, q} \; -G(p) - H(q),
	\eq
	subject to
	\bq
	Ap \, + \, Bq \, =\, 0 \, ,
	\eq
	where
	\[
	G(p) \, := \,  \langle 1, p_s \rangle  - {I}_{\{p_s \leq C_s\}}(p_s) - {I}_{\{p_t \leq C_t\}}(p_t), \ \ H(q):={I}_{\{\|q\|_{\infty} \leq \alpha\}}(q),
	\]
	and 
	\[
	A \,= \, [I,-I] \, ,  \quad B \, = \, \D \, .
	\]
	Its associated augmented Lagrangian functional can thus be formulated as follows:
	\bq \label{eq:alm:II}
	L(u, p, q) \, =\, -G(p) -H(q)  +  \langle u, Ap+Bq \rangle - \frac{c}{2}\|Ap+Bq\|_{2}^2 \,.
	\eq
	
	%Let's introduce a new variable $p=(p_t,p_s)^{T}$, the dual functions and the operators
	%\[
	%G(p) = -\langle 1, p_s \rangle  +{I}_{\{p_s \leq C_s\}}(p_s) + {I}_{\{p_t \leq C_t\}}(p_t), \ \  A = [I,-I],  \ \ B = \D.
	%\]
	%With the two blocks $p$ and $q$, we can use the preconditioned ADMM with relaxation. The linear constraint \eqref{eq:constraint:alm} becomes
	%\[
	%Ap + Bq=0.
	%\]
	%The augmented Lagrangian becomes
	%\begin{equation}\label{eq:alm:II}
	%L(u;p,q) = -G(p) - {I}_{\{\|q\|_{\infty} \leq \alpha\}}(q) +  \langle u, Ap+Bq \rangle - \frac{c}{2}\|Ap+Bq\|_{2}^2.
	%\end{equation}
	For the given matrix 
	\begin{equation}\label{eq:AadA}
	A^*A = \begin{bmatrix}
	I &  -I \\-I &I 
	\end{bmatrix},
	\end{equation}
	we see that $cA^*A +\partial G $ and $cB^*B + \partial H$ are nonlinear and maximal monotone operators, which, however, have no explicit inverse. The subproblems for ADMM involving $p$ and $q$ thus are very challenging to solve.

	Now, let's turn to the relaxed and preconditioned ADMM of Eckstein-Bertsekas type~\cite{SUN},
	which actually origins from the relaxed Douglas-Rachford splitting method to the dual problem \cite{EP}.
	%since one can get relaxed ADMM by applying Douglas-Rachford splitting method to the dual problem \cite{EP}. 
	The relaxed and preconditioned ADMM of Eckstein-Bertsekas type for solving the equivalent dual problem \eqref{eq:two-label-mf:twoblocks} reads as follows \cite{SUN},
	\begin{align}
	q^{k+1} &=(N + \partial H)^{-1}(B^*(-cAp^k + u^k) + (N-cB^*B)q^k), \notag \\
	p^{k+1} &= (M + \partial G)^{-1}(A^*(-c\rho_kBq^{k+1} +c(1-\rho_k)Ap^k + u^k)+(M-cA^*A)p^k), \notag\\
	u^{k+1} &=u^k - c(Ap^{k+1}-(1-\rho_k)Ap^k+\rho_kBq^{k+1})\, ,\label{eq:update:two:sun:lag}  
	\end{align}
	where $\{\rho_k \in (0,2)\}$ is a non-decreasing sequence, $c$ is the step size as before and $N$, $M$ are two bounded and linear operators (or matrices) satisfying
	\begin{equation}\label{eq:con:sun:pradmm}
	N-cB^*B \geq 0, \quad M-cA^*A \geq 0,
	\end{equation}
	which are sufficient for the convergence of \eqref{eq:update:two:sun:lag}. However, designing $M$ and $N$ satisfying \eqref{eq:con:sun:pradmm} such that $M +\partial H$ and $N + \partial G$ are more efficient to invert is very challenging. We present several different strategies depending on the corresponding operators.  For \eqref{eq:update:two:sun:lag}, we choose,
	\[
	N = acI, \quad M = \tilde a cI_2, \quad I_2 : = \text{Diag}[I,I].
	\]
	For the choice of the operators $M$ and $N$ and the convergence of  \eqref{eq:update:two:sun:lag}, we introduce the preconditioners to both dual variables with mild conditions and we have the following theorem. 
	
	\begin{theorem}\label{thm:twolabel}
		For the discretized divergence operator $\D$ and matrix $A = [I,-I]$, we have
		\[
		8I \geq \D^* \D, \quad 2I_2 \geq A^*A.
		\]
		We thus can choose $M=2cI$ and $N=8cI_2$ with $a=8$ and $\tilde a=2$ satisfying the condition \eqref{eq:con:sun:pradmm}. Assuming $\{\rho_k \in (0,2)\}$ is a non-decreasing sequence, then $(q^k,p^k, u^k)$ converges weakly
		to a saddle-point $(q^*, p^*, u^*)$  of \eqref{eq:alm:II} and $(q^*, p^*)$ is a solution of \eqref{eq:two-label-mf:twoblocks}.
	\end{theorem}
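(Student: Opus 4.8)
The plan is to reduce the theorem to two elementary spectral estimates and then invoke the convergence guarantee already recorded for the iteration \eqref{eq:update:two:sun:lag}. The only genuinely computational content is the pair of operator inequalities $8I \geq \D^* \D$ and $2I_2 \geq A^*A$; once these hold, the semidefinite conditions \eqref{eq:con:sun:pradmm} are immediate for the proposed preconditioners, and weak convergence to a saddle point follows directly from the relaxed and preconditioned Eckstein--Bertsekas theory of \cite{SUN,EP}.

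First I would verify $2I_2 \geq A^*A$, which is pure linear algebra. Since $A = [I,-I]$, the matrix $A^*A$ in \eqref{eq:AadA} is the tensor product of the $2\times 2$ matrix $\begin{bmatrix} 1 & -1 \\ -1 & 1 \end{bmatrix}$ with the identity on the pixel space. The $2\times 2$ factor has eigenvalues $0$ and $2$, so the spectrum of $A^*A$ is $\{0,2\}$ and hence $2I_2 - A^*A \geq 0$, with equality exactly on the range of $A^*$. This is a one-line check.

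Next I would prove $8I \geq \D^* \D$. Using the discrete integration-by-parts identity $\D = -\nabla^*$ one gets $\D^* = -\nabla$ and hence $\D^* \D = \nabla\nabla^*$, whose operator norm equals $\|\nabla\|^2$. It therefore suffices to bound $\|\nabla\|^2 \leq 8$ for the forward-difference gradient with the usual (Neumann-type) boundary convention. I would bound each directional difference operator by $4$ in squared norm: for any scalar field $u$, the elementary inequality $(a-b)^2 \leq 2a^2 + 2b^2$ gives $\sum_{i,j}(u_{i+1,j}-u_{i,j})^2 \leq 4\|u\|^2$, and likewise in the second coordinate, so that $\|\nabla u\|^2 \leq 8\|u\|^2$. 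Equivalently, $\nabla^*\nabla = -\Delta$ is the discrete negative Laplacian whose largest eigenvalue is bounded by $8$, which yields the same bound.

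With both estimates in place, the choices $N = 8cI$ on the $q$-block and $M = 2cI_2$ on the $p$-block give $N - c\D^* \D = c(8I - \D^* \D) \geq 0$ and $M - cA^*A = c(2I_2 - A^*A) \geq 0$, i.e.\ exactly \eqref{eq:con:sun:pradmm}. Because $G$ and $H$ are proper closed convex functions, $\partial G$ and $\partial H$ are maximal monotone, and a saddle point of \eqref{eq:alm:II} exists by the solvability of the convex max-flow model \eqref{eq:two-label-mf:twoblocks}. All hypotheses of the relaxed preconditioned Eckstein--Bertsekas ADMM \cite{SUN} are then met --- semidefinite preconditioners together with a non-decreasing relaxation sequence $\rho_k \in (0,2)$ --- so its convergence theorem delivers weak convergence of $(q^k, p^k, u^k)$ to a saddle point $(q^*, p^*, u^*)$, with $(q^*, p^*)$ solving \eqref{eq:two-label-mf:twoblocks}. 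I expect no serious obstacle: the one point requiring care is extracting the sharp constant $8$ (rather than a cruder bound) in the divergence estimate, while the convergence itself is imported wholesale from the cited abstract framework rather than reproved here.
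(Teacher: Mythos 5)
Your proposal is correct and follows essentially the same route as the paper: establish the two operator inequalities, observe that they yield \eqref{eq:con:sun:pradmm} for the stated preconditioners, and import the convergence from Theorem 4.1 of \cite{SUN}. The only differences are cosmetic --- you verify $2I_2 \geq A^*A$ spectrally where the paper checks the quadratic form directly, and you supply the standard $(a-b)^2 \leq 2a^2+2b^2$ derivation of $\D^*\D \leq 8I$ where the paper simply cites \cite{CP}.
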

	\begin{proof}
		It is known that $\D^*\D \leq 8I$, c.f.~\cite{CP}. Since for any $v = [x,y]^{T}$, we have
		\[
		[x,y]\begin{bmatrix}
		2I& 0 \\
		0& 2I
		\end{bmatrix}\begin{bmatrix}
		x \\y
		\end{bmatrix}
		=2x^2 + 2y^2 \geq x^2 + y^2 -2xy =v^TA^*Av =  [x,y]\begin{bmatrix}
		I& -I \\
		-I& I
		\end{bmatrix}\begin{bmatrix}
		x \\y
		\end{bmatrix},
		\]
		we get $2I_2 \geq A^*A$. The remaining convergence follows Theorem 4.1 in \cite{SUN}.
	\end{proof}
	
	%
	%\begin{prop}[\cite{SUN}]
	%
	%\end{prop}
	%\[
	%a \geq \|\D^* \D\|, \quad \tilde a \geq \|A^*A\|.
	%\]
	%Henceforth, we choose $\rho_k \equiv 1.9$. 
	By simple computation, we further write the detailed steps for each iteration of \eqref{eq:update:two:sun:lag} applying the \emph{continuous max-flow model} \eqref{eq:mpf-two-cond-01}: 
	\begin{align}
	%\hspace{-0.5cm}
	%\begin{cases}
	q^{k+1}& = \mathcal{P}_{\alpha}\left( (I-\frac{1}{a}{\D}^{*}\D)q^k  + \frac{1}{a} \nabla(p^k_t -p^k_s -\frac{u^k}{c})\right), \notag \\
	p_{t}^{k+1}&= \mathcal{P}_{C_t}\left(p_{t}^k  - \frac{\rho_k}{\tilde a}(p_t^k-p_s^k) +\frac{1}{\tilde a}(-\rho_k\D q^{k+1} + \frac{1}{c}u^k)\right),  \label{eq:pre:ADMM:twolabels:SUN} \tag{rpADMMII} \\
	p_{s}^{k+1}&=\mathcal{P}_{C_s}\left( p_s^k - \frac{\rho_k}{\tilde a}(p_s^k-p_t^k) +  \frac{1}{\tilde a}(\rho_k\D q^{k+1} - \frac{1}{c}u^k +\frac{1}{c})\right), \notag \\
	u^{k+1} &=u^k - c\left((p_t^{k+1}-p_s^{k+1})-(1-\rho_k)(p_t^{k}-p_s^k)+\rho_k \D q^{k+1}\right). \notag
	%\end{cases}
	\end{align}
	For the above algorithm \ref{eq:pre:ADMM:twolabels:SUN}, the projections $\mathcal{P}_{\alpha}$, $\mathcal{P}_{C_t}$, $\mathcal{P}_{C_s}$ are the same as in \eqref{eq:projections} and the parameters $a$, $\tilde a$ given in Theorem \ref{thm:twolabel}. $\rho_k \equiv 1.9$ is preferred in numerical computation. This kind of relaxation is originated from \cite{EP} and the principle is different from  the relaxation in \cite{FG}.
	\subsubsection{Relaxed and Preconditioned ADMM of Fortin-Glowinski Type}
	Now, let's turn to another novel relaxed preconditioned ADMM of Fortin-Glowinksi type for \eqref{eq:alm:II} as follows (c.f. \cite{DY,FG,LST}):
	\begin{equation}\label{eq:update:two:tai:lag} 
	\begin{cases}
	q^{k+1} = \text{argmax}_{q}L(u^k;p^k, q) - \frac{1}{2}\|q-q^k\|_{P},\\
	p^{k+1} = \text{argmax}_{p}L(u^k;p, q^{k+1}) -\frac{1}{2}\|p-p^k\|_{Q},\\
	u^{k+1} =u^k - rc(Ap^{k+1}+Bq^{k+1}),
	\end{cases}
	\end{equation}
	where the two linear operators $P := acI-c\D^*\D$ and $Q:=\tilde acI-cA^*A$. Unlike relaxation using \eqref{eq:update:two:sun:lag}, there is only relaxation on the updates of the Lagrangian multiplier $u$. 
	
	The linear operators $P$ and $Q$ are required to be positive semi-definite for the convergence with relaxation parameter $r \in (0, \frac{\sqrt{5}+1}{2})$, c.f. \cite{DY,LST,FG}. Theorem \ref{thm:twolabel} reveals that we need  $a \geq 8$ and $\tilde a \geq 2$. %For the dual problem \eqref{eq:twoseg:primal_dual} and the corresponding augmented Lagrangian function \eqref{eq:augfunc:twolabel}, we choose 
	%\[
	%P  = aI-\D^*\D \geq 0, \quad Q = \tilde aI-A^*A \geq 0,
	%\]
	%for the solving the subproblem \eqref{eq:update:two:tai:lag} efficiently.
	%By Lemma \ref{lem:twolabel}, we see 
	%\[
	%a \geq 8, \quad \tilde a \geq 2.
	%\]
	%For the update of $q^{k+1}$, we have
	%\begin{equation}\label{eq:update:q:twolabel:tai}
	%q^{k+1} = \mathcal{P}_{\alpha}\left( (I-\frac{1}{a}{\D}^{*}\D)q^k  + \frac{1}{a} \nabla(p^k_t -p^k_s -\frac{u^k}{c})\right),
	%\end{equation}
	%which is the same as in \eqref{eq:ADMM:Tai_yuan}.
	%
	%For the update of $p$, writing component-wisely, noting
	%\[
	%A^*B = \begin{bmatrix}
	%\D  \\ -\D
	%\end{bmatrix},
	%\]
	%combining with \eqref{eq:AadA} and the update \eqref{eq:alm:II}, together with the update of $q^{k+1}$ in \eqref{eq:update:q:twolabel:tai} and the Lagrangian multiplier \eqref{eq:update:two:tai:lag}, 
	By simple computation, we get the following detailed steps for each iteration of \eqref{eq:update:two:tai:lag}:
	\begin{align}\label{eq:pre:ADMM:twolabels:FG} \tag{rpADMMI}
	\begin{cases}
	q^{k+1} = \mathcal{P}_{\alpha}\left( (I-\frac{1}{a}{\D}^{*}\D)q^k  + \frac{1}{a} \nabla(p^k_t -p^k_s -\frac{u^k}{c})\right),  \notag\\
	p_{t}^{k+1}= \mathcal{P}_{C_t}\left(p_{t}^k  - \frac{1}{\tilde a}(p_t^k-p_s^k) +\frac{1}{\tilde a}(-\D q^{k+1} + \frac{1}{c}u^k)\right), \\
	p_{s}^{k+1}=\mathcal{P}_{C_s}\left( p_s^k - \frac{1}{\tilde a}(p_s^k-p_t^k) +  \frac{1}{\tilde a}(\D q^{k+1} - \frac{1}{c}u^k +\frac{1}{c})\right), \notag\\
	u^{k+1} = u^k - rc\left((p_t^{k+1}-p_s^{k+1})+\D q^{k+1}\right). \notag
	\end{cases}
	\end{align}
	In the sequel, we choose $a=8$, $\tilde a = 2$ and the relaxation parameter $r=1.618$ for the experiments by \eqref{eq:pre:ADMM:twolabels:FG}. 
	
	%Now, let's get back to \eqref{eq:ADMM:Tai_yuan}. We found that it turns out to be a preconditioned multi-block ADMM method without convergence guarantee. 
	\begin{remark}\label{remark:two}
		The ADMM  \eqref{eq:ADMM:Tai_yuan} in \cite{Yuan2010,ybtb2014,YBTB10} is equivalent to the following proximal ADMM
		\begin{subequations}
			\begin{align}
			q^{k+1} &=  \text{argmax}_{q}L(u^k;p_t^k, p_s^k, q) - \frac{1}{2}\|q-q^k\|_{acI-c\nabla \nabla ^*} \, ,\label{eq:aug:pre1:seg}\\
			p_s^{k+1} &=  \text{argmax}_{p_s}L(u^k;p_t^k, p_s, q^{k+1})\, ,\\
			p_t^{k+1} &=  \text{argmax}_{p_t}L(u^k;p_t, p_s^{k+1}, q^{k+1}) \,,
			\end{align}
		\end{subequations}
		where the weighted norm $\| \cdot \|_{acI-c\nabla \nabla ^*}$ is defined as
		\[
		\|q-q^k\|_{acI - c\nabla \nabla ^*}^2 =  \langle (acI - c\nabla \nabla ^*)(q-q^k) ,q-q^k \rangle.
		\]
	\end{remark}
	Clearly, the parameter $a \geq 8$ should be chosen in order to guarantee non-negativeness of the matrix $acI-c \nabla \nabla ^*$, which is exactly the case  ${1}/{a} = 0.125$ that is employed in \cite{Yuan2010,ybtb2014,YBTB10}.

	%In the experiments for \eqref{eq:pre:ADMM:twolabels:SUN}, we choose $a=8$ and $\tilde a=2$.
	%In the next subsection, we would discuss the relaxed and preconditioned Douglas-Rachford splitting for comparison \cite{BS1}. 
	%Additionally, we would also discuss the preconditioned and over-relaxed Douglas-Rachford splitting method \cite{BS1} and the first order primal-dual method \cite{CP} for comparisons.
	
	%\subsection{Two Labels Image segmentation with ADMM}
	%Before the discussion of the augmented Lagrangian method and ADMM, let's give the following simple lemma for preconditioning. 
	%\begin{lemma}\label{lem:twolabel}
	%For the discretized divergence operator $\D$\cite{CP} and matrix $A = [I,-I]$, denoting $I_2 = \text{\text{Diag}}{[I, I]}$, we have
	%\[
	%8I \geq \D^* \D, \quad 2I_2 \geq A^*A.
	%\]
	%\end{lemma}
	%\begin{proof}
	%It is known that $\D^*\D \leq 8I$ \cite{CP0}. Since for any $v = [x,y]^{T} \in \mathbb{R}^2$, we have
	%\[
	%[x,y]\begin{bmatrix}
	%2I& 0 \\
	%0& 2I
	%\end{bmatrix}\begin{bmatrix}
	%x \\y
	%\end{bmatrix}
	%=2x^2 + 2y^2 \geq x^2 + y^2 -2xy =v^TA^*Av =  [x,y]\begin{bmatrix}
	%I& -I \\
	%-I& I
	%\end{bmatrix}\begin{bmatrix}
	%x \\y
	%\end{bmatrix},
	%\]
	%we get $2I_2 \geq A^*A$.
	%\end{proof}
	%In the sequel, we will first discuss the existed ADMM for max-flow problem \cite{BYT,YBT,YBTB}. We then present the two novel, compact over-relaxed and preconditioned ADMM. 

	\subsubsection{ Relaxed Preconditioned Splitting Method of Douglas-Rachford Type}
	
	In this part, we would introduce a relaxed preconditioned splitting method of Douglas-Rachford type~\cite{BS1}, which is particularly designed to efficiently tackle the following primal-dual optimization with a special quadratic term:
	%Additionally, we would also discuss the preconditioned and over-relaxed Douglas-Rachford splitting method \cite{BS1} and the first order primal-dual method \cite{CP} for comparisons.
	%We will give a brief introduction to the preconditioned Douglas-Rachford splitting method \cite{BS}. For the following quadratical minimization problem
	\begin{equation}\label{eq:saddle:qr}
	\min_{x} \max_{y} {F}(x) + \langle {K}x,y \rangle -{G}(y),  
	\end{equation}
	where ${F}(x)  = \langle \frac{1}{2}Qx-f,x \rangle $. 
	
	%We have the preconditioned Douglas-Rachford splitting method for quadratical problem.
	Each iteration of the relaxed preconditioned splitting method of Douglas-Rachford type for such special type primal-dual optimization problem \eqref{eq:saddle:qr} can be written as:
	%, we get the following special preconditioned DR method for quadratical problem,
	\begin{equation}\label{iteration:quadratic} \tag{rPDRQ}
	\begin{cases}
	x^{k+1} = x^{k} + M_{Q}^{-1}[\sigma f  - \sigma {K}^{*} \bar y^{k}- T_{Q} x^{k}], \\
	y^{k+1} = \bar y^k + \tau {K} x^{k+1},  \\
	\bar{y}^{k+1} = \bar y^k + \rho[( I + \tau \partial {G})^{-1}
	(2y^{k+1} - \bar y^k) - y^{k+1}], 
	\end{cases}
	\end{equation}
	where $M_{Q} = N_{1} +\sigma Q + \sigma \tau {K}^* {K}$ is the preconditioner for $T_{Q}= \sigma Q + \sigma \tau {K}^* {K}$ and $\sigma$, $\tau$ are positive sizes that can be chosen freely. The convergence of iterations \eqref{iteration:quadratic} can be guaranteed; see \cite{BS1} for the case $\rho=1$ and \cite{BS} for the case $\rho \in (0,2)$.
	\begin{prop}[\cite{BS1}]\label{pro:pdr}
		Assuming $x \in X$ and $y \in Y$ with $X$, $Y$ being the finite dimensional spaces, if the preconditioner satisfies the feasibility condition, i.e.,  $M_Q \geq T_Q$, then iteration sequence $\{ x^k, y^k\}$ of   the preconditioned Douglas-Rachford splitting \eqref{iteration:quadratic}  converges to a saddle-point $(x^*, y^*)$ of \eqref{eq:saddle:qr}.	
	\end{prop}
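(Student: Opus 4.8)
The plan is to recognize the iteration \eqref{iteration:quadratic} as a preconditioned instance of the Douglas--Rachford splitting applied to the monotone inclusion that encodes the optimality system of \eqref{eq:saddle:qr}, and then to run the standard averaged-operator convergence argument, but carried out in the geometry induced by the preconditioner $M_Q$.

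First I would rewrite the saddle-point problem \eqref{eq:saddle:qr} as a monotone inclusion. Since $F(x) = \langle \frac{1}{2}Qx - f, x\rangle$ with $Q$ symmetric positive semidefinite, the first-order optimality conditions read $0 \in Qx - f + K^*y$ together with $Kx \in \partial G(y)$. Collecting terms, this is a zero-finding problem $0 \in (\mathcal{A} + \mathcal{B})(z)$ for $z = (x,y)$, where $\mathcal{B}$ carries the resolvable part associated with $\tau\partial G$ (whose resolvent $(I + \tau\partial G)^{-1}$ appears explicitly in the third line of \eqref{iteration:quadratic}), and $\mathcal{A}$ carries the linear part built from $Q$ together with the skew coupling $K, K^*$. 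Both operators are maximal monotone: $\mathcal{A}$ as the sum of a positive semidefinite symmetric map and a skew-symmetric map, and $\mathcal{B}$ as the subdifferential of a proper lower semicontinuous convex function. Next I would match the three update lines against the Douglas--Rachford template. The reflection pattern $2y^{k+1} - \bar y^k$ in the last line is the signature of Douglas--Rachford: $\bar y$ plays the role of the auxiliary (shadow) sequence, $y^{k+1}$ is the image of the first resolvent, and the third line applies the second resolvent to the reflected point and relaxes it by $\rho$. The essential point is that the first resolvent is not inverted exactly; instead of solving the linear system $T_Q x = (\text{rhs})$ exactly, one performs the preconditioned step $x^{k+1} = x^k + M_Q^{-1}[\sigma f - \sigma K^*\bar y^k - T_Q x^k]$ with $M_Q = N_1 + T_Q$. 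I would verify algebraically that this step is precisely the resolvent of $\mathcal{A}$ computed in the $M_Q$-inner product, so that the whole iteration is the Douglas--Rachford operator evaluated in the metric determined by $M_Q$.

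The main work, and the step I expect to be the principal obstacle, is showing that this preconditioned Douglas--Rachford operator is firmly nonexpansive (for $\rho = 1$), or $\rho/2$-averaged (for $\rho \in (0,2)$), with respect to the norm $\|\cdot\|_{M_Q}$ induced by the preconditioner. Here the feasibility condition $M_Q \geq T_Q$, equivalently $N_1 \geq 0$, is indispensable: it guarantees that $M_Q$ defines a genuine inner product and, more importantly, that the inexactness introduced by replacing the exact resolvent with the $M_Q$-preconditioned solve enters the energy estimate nonnegatively, so that the averagedness inequality remains valid. I would then check that the fixed points of this operator are in one-to-one correspondence with the saddle points of \eqref{eq:saddle:qr}, so that the fixed-point set is nonempty whenever a saddle point exists.

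Finally, since $X$ and $Y$ are finite dimensional, I would invoke the Krasnosel'ski\u{\i}--Mann convergence theorem for averaged operators to conclude that $(x^k, y^k)$ converges to a fixed point, hence to a saddle point $(x^*, y^*)$ of \eqref{eq:saddle:qr}; in finite dimensions the weak convergence delivered by the abstract theory is automatically strong. The detailed averagedness estimate and the identification of the first step with the resolvent in the $M_Q$-metric are exactly the computations performed in \cite{BS1} for the case $\rho = 1$ and extended in \cite{BS} to $\rho \in (0,2)$, which is why I would quote those references for the remaining routine verifications.
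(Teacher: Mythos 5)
The paper does not actually prove this proposition: it is imported verbatim from \cite{BS1} (with the relaxation $\rho\in(0,2)$ covered by \cite{BS}), so there is no internal proof to compare against. Your sketch --- recasting the optimality system of \eqref{eq:saddle:qr} as a monotone inclusion, identifying the first line of \eqref{iteration:quadratic} as the resolvent of the linear part in the $M_Q$-metric, establishing averagedness of the resulting Douglas--Rachford map under the feasibility condition $M_Q \ge T_Q$, and concluding via Krasnosel'skii--Mann in finite dimensions --- is precisely the argument of those references and is correctly summarized; the only small imprecision is that $M_Q \ge T_Q$ by itself gives $N_1 \ge 0$ but not that $M_Q$ is positive \emph{definite}, which in the paper's application holds anyway since $T_Q = \sigma\tau(-\Delta + 2I) > 0$.
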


	To confirm the convergence of the above algorithm \eqref{iteration:quadratic}, $M_Q \geq T_Q$ is required~\cite{BS}, hence $N_1= M_Q-T_Q$ must be a positive semi-definite matrix. 
	
	Now we consider the equivalent primal-dual formulation \eqref{eq:saddle:qr} with the following data,
	%\begin{equation}\label{eq:primal-dual:2:seg}
	%\max_{p_s,p_t, q} \min_{u} \langle (1-u), p_s \rangle + \langle u, p_t \rangle + \langle u, \D q \rangle - {I}_{\{p_s \leq C_s\}}(p_s) - {I}_{\{p_t \leq C_t\}}(p_t) - {I}_{\{\|q\|_{\infty} \leq \alpha\}}(q).   
	%\end{equation}
	%Now, let's write \eqref{eq:primal-dual:2:seg} into the form of saddle-point problem \eqref{eq:saddle}, with the data
	\begin{equation}\label{eq:pdrq:data:f}
	{F}(x) = 0, \  Q = 0, \ x =u, \quad {K} = \begin{pmatrix}-\nabla \\ I \\ -I \end{pmatrix}, \quad y = \begin{pmatrix}q \\p_t\\ p_s\end{pmatrix} \, , 
	\end{equation}
	and 
	\begin{equation}\label{eq:pdrq:data:g}
	{G}(y) = -\langle 1, p_s \rangle +  {I}_{\{p_s \leq C_s\}}(p_s) + {I}_{\{p_t \leq C_t\}}(p_t) + {I}_{\{\|q\|_{\infty} \leq \alpha\}}(q) \, .
	\end{equation}
	Therefore ${K}^* {K}$ is
	\[
	\begin{pmatrix}\D &I& -I\end{pmatrix}\begin{pmatrix}-\nabla \\ I \\ -I \end{pmatrix} = -\Delta + 2I,
	\]
	followed by
	\[
	M_{Q} = N_1 + \sigma \tau {K}^* {K} = N_1 + \sigma\tau(-\Delta + 2I), \quad T_{Q} =  \sigma \tau {K}^* {K} =  \sigma\tau(-\Delta + 2I),
	\]
	with $N_1 \geq 0$. 
	
	Actually, the symmetric red-black Gauss-Seidel (sRBGS) type of algorithm can be used and $N_1 + \sigma \tau  {K}^* {K}$  is just the sRBGS preconditioner for $T_Q$, c.f.~\cite{BS}.
	We thus get the following preconditioned Douglas-Rachford splitting method as \eqref{iteration:quadratic} for \eqref{eq:pd:2:seg}:
	\begin{equation}\label{iteration:quadratic:seg} 
	\begin{cases}
	u^{k+1} = u^{k} + M_{Q}^{-1}[ - \sigma(\D \bar q^k +\bar p^k_t -\bar p^k_s)- T_{Q} u^{k}], \\
	q^{k+1} = \bar q^k - \tau  \nabla u^{k+1},  \\
	p^{k+1}_t = \bar p^k_t + \tau u^{k+1},\\
	p^{k+1}_s = \bar p^k_s  -\tau u^{k+1},\\
	\bar q^{k+1} =  \bar q^k + \rho [\mathcal{P}_{\alpha}(2 q^{k+1} - \bar q^k)  - q^{k+1}], \\
	\bar p^{k+1}_t =  \bar p^k_t + \rho [\mathcal{P}_{C_t}(2 p^{k+1}_t - \bar p^k_t)  - p^{k+1}_t], \\
	\bar p^{k+1}_s =  \bar p^k_s + \rho [\mathcal{P}_{C_s}(2 p^{k+1}_s - \bar p^k_s)  - p^{k+1}_s].
	\end{cases}
	\end{equation}
	One can combine the last six equations to three, eliminating $ q^{k+1},  p^{k+1}_t,  p^{k+1}_s$. For example
	\[
	\bar q^{k+1}   = (1-\rho)\bar q^k +\rho \tau \nabla u^{k+1}+ \rho \mathcal{P}_{\alpha}(-2 \tau \nabla u^{k+1} + \bar q^k) .
	\]
	In numerical tests, we set $\rho = 1.9$.
	The convergence of this relaxed and preconditioned Douglas-Rachford splitting method can be guaranteed. The theory of \cite{BS1} can be used for the convergence analysis.

	\subsection{Novel ADMMs for Multiphase Image Segmentation}\label{sec:multi:seg}
	
	In this section, we mainly focus on the multi-phase case. Although the framework is similar to the two-phase case, the block preconditioners are different due to the more complicated structures. We mainly discuss the over-relaxed ADMM of Eckstein-Bertsekas type \cite{EP} and Fortin-Glowinkis type \cite{FG}. %in \cite{DY,LST,SUN} that have convergence guarantee. 
	We shall develop several novel and efficient preconditioners.
	%In addition,  we will also give a discussion the relaxed and preconditioned Douglas-Rachford splitting method for multi-labeling case.
	
	%%%%%%%%%%%%%%%%%%%%%%%%%%%%%%%%%%%%%%%%%%%%%%%%%%%%%%%%%%%%%%%%%%%%%%%%%%%%%%%%%%%%
	\subsubsection{Relaxed Augmented Lagrangian Method of Eckstein-Bertsekas Type: Multi-phase case}
	Let's first introduce the augmented Lagrangian functional for \eqref{eq:potts-mf} with constraints \eqref{eq:mpf-cond-01} and \eqref{eq:mpf-cond-03} and the notations $\bf u$, $\bf p$ and $\bf q$ as:
	\bq \label{alm:multi:variables} 
	{\bf u} = (u_1, ..., u_n)^T \, , \quad {\bf q} = (q_1, ..., q_n)^T\, , \quad {\bf \bar p} = (p_1, ..., p_n)^T\, .
	\eq
	The augmented Lagrangian functional can be written as follows:
	\begin{align}
	L({\bf u}; {\bf q}; {\bf p}): =&\langle p_s,1 \rangle -\sum_{i=1}^n I_{\{p_i \leq \rho(l_i,x)\}}(p_i)  \notag -\sum_{i=1}^n I_{\{\|q_i\| \leq \alpha\}}(q_i) \\
	&+ \sum_{i=1}^n \langle u_i, \D q_i +p_i -p_s \rangle   - \frac{c}{2}\sum_{i=1}^n\|\D q_i + p_i -p_s\|_{2}^2. \label{eq:multi:aug:func}
	\end{align}
	%Then we can solve \eqref{eq:potts-mf} with the following equivalent problem
	%\begin{equation}\label{eq:original:alm:multi}
	%\min_{u_1, u_2, \cdots, u_n}\max_{q_1, q_2, \cdots, q_n; p_1, p_2, \cdots, p_n, p_s}L(u_1, u_2, \cdots, u_n; q_1, q_2, \cdots, q_n; p_1, p_2, \cdots, p_n, p_s),
	%\end{equation}
	%where one can try multi-block ADMM whose convergence is not guaranteed. 
	We will show how to solve it with the classical two-block ADMM with proximal terms.  The notations $A^*$, $B^*$, $L_n$ and $I_n$ are as follows,
	\begin{equation}\label{eq:AB:notion}
	B^* = \text{Diag}\underbrace{[-\nabla, -\nabla, \cdots, -\nabla]}_{n}, \quad A^* = \begin{bmatrix} 
	I_n \\ -L_n'
	\end{bmatrix}, 
	\end{equation}
	and
	\begin{equation}\label{eq:LI:notion}
	L_n = \underbrace{[I, I, \cdots, I]^{T}}_n, \quad I_n  = \text{Diag}\underbrace{[I, I, \cdots, I]}_n.
	\end{equation}
	Let's further introduce the following block variables and operators:
	\begin{align*}
	%&{\bf q} = (q_1, q_2, \cdots, q_n)^{T}, \ \ {\bf p}=( p_1, p_2, \cdots, p_n, p_s)^{T}, \ {\bf u}=(u_1, u_2, \cdots, u_n)^{T},   \\
	& G({\bf p}) = -\langle p_s,1 \rangle +\sum_{i=1}^n I_{\{p_i \leq \rho(l_i,x)\}}(p_i), \quad H({\bf q}) = \sum_{i=1}^n I_{\{\|q_i\| \leq \alpha\}}(q_i),\\
	& A = [I_n, -L_n], \quad  B = \text{Diag}\underbrace{[\D, \D, \cdots, \D]}_n.
	\end{align*} 
	Then the constraint \eqref{eq:mpf-cond-03} can be written as:
	\begin{equation}
	A{\bf p} + B {\bf q} = 0.
	\end{equation}
	The augmented Lagrangian \eqref{eq:multi:aug:func} thus can be reformulated as the following two-block problem:
	\begin{equation}\label{eq:aug:twoblock:multi}
	\min_{{\bf u}}\max_{{\bf q},{\bf p}}L({\bf u}; {\bf q},{\bf p})  := -G({\bf p}) -H({\bf q}) + \langle  {\bf u}, A {\bf p}+ B {\bf q}\rangle - \frac{c}{2}\|A {\bf p}+B {\bf q}\|_{2}^2.
	\end{equation}
	The preconditioning for $\bf q$ is similar to the two-phase case. Let's turn to preconditioning the ${\bf p}$ variable.  By direct calculation, we see
	\[
	A^*A = \begin{bmatrix}
	I_n & -L_n \\
	-L_n' & nI
	\end{bmatrix}.
	\]
	Since $(cA^*A + \partial G)^{-1} $ do not have explicit representations and is hard to invert, specially designed preconditioners are needed. 
	Now, let's introduce our novel diagonal operator for dealing with the implicit equation of the $p$ variable and the corresponding efficient preconditioner.
	\begin{equation}\label{eq:multilabel:pre:p}
	\tilde{A} = \text{Diag}\underbrace{[a_1I, a_1I, \cdots, a_1I, a_2I]}_{n+1},\quad a_1 \geq 2, \ \ a_2 \geq 2n. 
	\end{equation}
	
	The linear operators $N$ and $M$ in \eqref{eq:update:two:sun:lag} are chosen as follows 
	\begin{equation}\label{eq:alm:multi:sun:pre}
	N = acI_n, \quad M = c\tilde A, \quad I_n: = \text{Diag}\underbrace{[I, I, \cdots, I]}_{n},
	\end{equation}
	where $a \geq \|\D^*\D\|$ and $\tilde A$ is the same as in \eqref{eq:multilabel:pre:p}. With preconditioners in \eqref{eq:alm:multi:sun:pre}, denoting $\tilde {\bf q} = (\tilde q_1, \tilde q_2, \cdots, \tilde q_n)^{T}$ and $\tilde {\bf p} = (\tilde p_1, \tilde p_2, \cdots, \tilde p_n ,\tilde p_s)^{T}$, we have 
	\begin{align*}
	(N+\partial H)^{-1}(\tilde q) &=  \left(\mathcal{P}_{\alpha}(\frac{\tilde q_1}{ac}),\mathcal{P}_{\alpha}(\frac{\tilde q_2}{ac}),\cdots,\mathcal{P}_{\alpha}(\frac{\tilde q_n}{ac})\right),\\
	(M+\partial G)^{-1}(\tilde p) &= \left(\mathcal{P}_{\rho(l_1,x)}(\frac{\tilde p_1}{a_1c}),\mathcal{P}_{\rho(l_2,x)}(\frac{\tilde p_2}{a_1c}),,\cdots,\mathcal{P}_{\rho(l_n,x)}(\frac{\tilde p_n}{a_1c}), \frac{\tilde p_s +1}{a_2c}\right).
	\end{align*}
	
	For the choice of $M$ and $N$ for the updating of $\bf p$ and $\bf q$ of the relaxed and preconditioned ADMM \eqref{eq:update:two:sun:lag}, we have the following theorem. 
	\begin{theorem}\label{lem:digonal:p:multi:4}
		For the diagonal operators $\tilde A$, we have  $ \tilde{A} \geq A^*A$.
		We thus choose $M=acI_n$ and $N=c\tilde A$ with $a=8$ satisfying the condition in \eqref{eq:con:sun:pradmm}. Assuming $\{\rho_k \in (0,2)\}$ is a non-decreasing sequence, then $(\bf q^k,\bf p^k, \bf u^k)$ converges weakly
		to a saddle-point $(\bf q^*, \bf p^*, \bf u^*)$  of \eqref{eq:aug:twoblock:multi} and $(\bf q^*, \bf p^*)$ is a solution of \eqref{eq:potts-mf}.
	\end{theorem}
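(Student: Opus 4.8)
The plan is to verify the two feasibility conditions in \eqref{eq:con:sun:pradmm} for the proposed preconditioners and then invoke the abstract convergence result of \cite{SUN}, exactly as in the proof of Theorem \ref{thm:twolabel}. Assigning $N$ and $M$ as in \eqref{eq:alm:multi:sun:pre}, with $N = acI_n$ preconditioning the $\mathbf{q}$-update and $M = c\tilde A$ preconditioning the $\mathbf{p}$-update, the two requirements $N - cB^*B \geq 0$ and $M - cA^*A \geq 0$ decouple into one statement about $B$ and one about $A$, which I would treat separately.

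First I would dispatch the $\mathbf{q}$-block. Since $B = \text{Diag}[\D, \ldots, \D]$ is block diagonal, $B^*B = \text{Diag}[\D^*\D, \ldots, \D^*\D]$, so $N - cB^*B = c\,\text{Diag}[aI - \D^*\D, \ldots, aI - \D^*\D]$ is positive semidefinite as soon as $aI \geq \D^*\D$ on each block. This is exactly the discrete bound $\D^*\D \leq 8I$ (c.f.~\cite{CP}) already used in Theorem \ref{thm:twolabel}, so $a = 8$ suffices.

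The heart of the proof is the first claim $\tilde A \geq A^*A$, which is equivalent to $M - cA^*A \geq 0$. I would compute
\[
\tilde A - A^*A = \begin{bmatrix} (a_1 - 1)I_n & L_n \\ L_n' & (a_2 - n)I \end{bmatrix},
\]
and test it against $v = (x_1, \ldots, x_n, y)$. Using $L_n y = (y, \ldots, y)^T$ and $L_n' (x_1,\ldots,x_n) = \sum_i x_i$, the quadratic form becomes
\[
\langle (\tilde A - A^*A)v, v\rangle = (a_1-1)\sum_{i=1}^n \|x_i\|^2 + 2\sum_{i=1}^n \langle x_i, y\rangle + (a_2 - n)\|y\|^2.
\]
The only obstacle is controlling the $n$ indefinite cross terms. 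Applying Young's inequality $2\langle x_i, y\rangle \geq -\|x_i\|^2 - \|y\|^2$ to each term lower-bounds the form by $(a_1 - 2)\sum_i \|x_i\|^2 + (a_2 - 2n)\|y\|^2$, which is nonnegative precisely under the stated thresholds $a_1 \geq 2$ and $a_2 \geq 2n$; the factor $n$ in $a_2 \geq 2n$ arises because there are $n$ cross terms, each donating one copy of $-\|y\|^2$. (A Schur-complement computation gives the sharper threshold $a_2 \geq n a_1/(a_1 - 1)$, which reduces to $a_2 \geq 2n$ at $a_1 = 2$, confirming tightness.)

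With both conditions in \eqref{eq:con:sun:pradmm} verified, the weak convergence of $(\mathbf{q}^k, \mathbf{p}^k, \mathbf{u}^k)$ to a saddle point of \eqref{eq:aug:twoblock:multi}, and hence the optimality of $(\mathbf{q}^*, \mathbf{p}^*)$ for \eqref{eq:potts-mf}, follows verbatim from Theorem 4.1 of \cite{SUN}, since the multiphase iteration \eqref{eq:update:two:sun:lag} with these operators fits its hypotheses (maximal monotonicity of $\partial G$ and $\partial H$, together with the non-decreasing relaxation $\rho_k \in (0,2)$). The substantive work is therefore the positive-semidefiniteness estimate above; everything else is a direct appeal to the established abstract theorem.
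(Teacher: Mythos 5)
Your proposal is correct and follows essentially the same route as the paper: the core step in both is the elementary bound $\pm 2\langle p_i, p_s\rangle \leq \|p_i\|^2 + \|p_s\|^2$ applied to the $n$ cross terms, yielding exactly the thresholds $a_1 \geq 2$, $a_2 \geq 2n$, with the convergence then delegated to Theorem 4.1 of \cite{SUN} as in Theorem \ref{thm:twolabel}. The only differences are cosmetic (you test $\tilde A - A^*A \geq 0$ directly rather than bounding $\langle \mathbf{p}, A^*A\mathbf{p}\rangle$ from above) plus your added Schur-complement tightness remark, which the paper does not include.
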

	\begin{proof}
		We first show that for any ${\bf p}=(p_1,p_2, \cdots, p_n, p_s)^{T}$, 
		\[
		\langle \tilde A {\bf p}, {\bf p} \rangle \geq \langle {\bf p}, A^*A {\bf p} \rangle.  
		\]
		With direct calculation, we have
		\begin{align*}
		\langle {\bf p}, A^*A {\bf p} \rangle &= \langle (p_1,p_2, \cdots, p_n, p_s)^{T}, (p_1-p_s, p_2 -p_s, \cdots, p_n-p_s, -\sum_{i=1}^np_i  + np_s)^{T} 
		\rangle \\
		& =\sum_{i=1}^n p_i(p_i-p_s) + p_s(-\sum_{i=1}^np_i  + np_s) \\
		& = \sum_{i=1}^n(p_i^2 -2p_ip_s) + np_s^2   \leq 2\sum_{i=1}^n p_i^2 + 2np_s^2 \\
		&\leq \langle \tilde A {\bf p},  {\bf p}\rangle. 
		\end{align*}
		The convergence follows similarly to Theorem \ref{thm:twolabel}.
	\end{proof}

	With these preparations, for $n=4$, writing the algorithm \ref{eq:update:two:sun:lag} for the 4-labeling case \eqref{eq:potts-mf}  component-wisely, we have
	\begin{subequations}
		\begin{align}
		q_i^{k+1} &= \mathcal{P}_{\alpha}\left\{ (I-\frac{1}{a}{\D}^{*}\D)q_i^k  + \frac{1}{a} \nabla(p_i^k -{p_s}^k -\frac{u_i^k}{c})\right\}, \  \ i=1,2,3,4, \\
		p_i^{k+1}&= \mathcal{P}_{\rho(l_i,x)}\left(p_{i}^k  - \frac{1}{a_1}(\rho_k(p_i^k-p_s^k) -\frac{1}{c}u_i^k+\rho_k\D q_i^{k+1} )\right), \ i=1,2,3,4, \notag \\
		p_s^{k+1} & = p_s^{k} + \frac{1}{a_2} \left( \sum_{i=1}^4 \rho_k\D q_i^{k+1} + \sum_{i=1}^4(\rho_k(p_i^k-{p_s}^k )-\frac{1}{c}u_i^k) + \frac{1}{c} \right), \\
		u_i^{k+1} & = {u_i}^k - c\left(\rho_k\D q_i^{k+1} + \rho_k(p_i^{k+1} - p_s^{k+1})-(1-\rho_k)(p_i^k-p_s^k)\right), \ \ i=1,2,3,4. \notag 
		\end{align}
	\end{subequations}
	Here we choose $a=8$, $a_1 = 2$ and $a_2=8$ according to \eqref{eq:multilabel:pre:p} and Theorem \ref{lem:digonal:p:multi:4} and the over-relaxation parameter $\rho_k \equiv 1.9$.
	
	\subsubsection{Relaxed Preconditioned ADMM of Fortin-Glowinski Type: Multi-phase case}
	
	Similar to the two-phase case, we start from the augmented Lagrangian \eqref{eq:aug:twoblock:multi}. For the $\bf q$ block, we will deal with it as the two-block case, i.e.,
	\[
	{\bf q}^{k+1} =  \text{argmax}_{{\bf q}} L( {\bf u}; {\bf q},{\bf p})  - \frac{1}{2} \|{\bf q}-{\bf q}^k\|_{acI_n - cB^*B},
	\] 
	where $a \geq \|\D^* \D\|$. 
	For the $p$ variable, we can introduce the proximal term
	$\frac{c}{2}\|{\bf p}-{\bf p}^k\|_{c\tilde A - cA^*A}$. The proximal terms satisfy the conditions
	\[
	aI_n \geq  B^*B, \quad \tilde A - A^*A,
	\]
	with $a \geq 8$ and $\tilde A$ as in \eqref{eq:multilabel:pre:p} and the convergence follows.  We finally obtain the proximal ADMM for \eqref{eq:potts-mf} as follows,
	\begin{equation}\label{eq:aug:multi:prox:seg}
	\begin{cases}
	{\bf q}^{k+1} =  \text{argmax}_{{\bf q}}L( {\bf u}^k;{\bf p}^k, {\bf q}) - \frac{1}{2}\|{\bf q}-{\bf q}^k\|_{acI_n-cB^*B},\\
	{\bf p}^{k+1} =  \text{argmax}_{{\bf p}}L( {\bf u}^k; {\bf p}, {\bf q}^{k+1}) -\frac{1}{2}\|{\bf p}- {\bf p}^k\|_{c\tilde A - cA^*A},\\
	{\bf u}^{k+1} =  {\bf u}^k - rc(A{\bf p}^{k+1} + B{\bf q}^{k+1}),
	\end{cases}
	\end{equation}
	where $r$ is the relaxation parameter with $r \in (0, \frac{\sqrt{5}+1}{2})$. For the 4 labels ($n=4$) case, we choose
	\[
	\tilde A = \text{Diag}[a_1I, a_1I, a_1I, a_1I, a_2I], \quad a_1=2, \quad a_2 = 8. 
	\] 
	Writing \eqref{eq:aug:multi:prox:seg} component-wisely, we arrive at 
	\begin{equation}\label{eq:multi:fg:4}
	\begin{cases}
	q_i^{k+1} = \mathcal{P}_{\alpha}\{ (I-\frac{1}{a}{\D}^{*}\D)q_i^k  + \frac{1}{a} \nabla(p_i^k -{p_s}^k -\frac{u_i^k}{c})\}, \  \ i=1,2,3,4, \\
	p_i^{k+1}= \mathcal{P}_{\rho(l_i,x)}\left(p_{i}^k  - \frac{1}{a_1}(p_i^k-p_s^k -\frac{1}{c}u_i^k+\D q_i^{k+1} )\right), \ i=1,2,3,4, \\
	p_s^{k+1}  = p_s^{k} + \frac{1}{a_2} \left( \sum_{i=1}^4 \D q_i^{k+1} + \sum_{i=1}^4(p_i^k-{p_s}^k -\frac{1}{c}u_i^k) + \frac{1}{c} \right), \\
	u_i^{k+1}  = {u_i}^k - rc(\D q_i^{k+1} + p_i^{k+1} - p_s^{k+1}), \ \ i=1,2,3,4.
	\end{cases}
	\end{equation}
	%Likewise, we can reformulate the dual model \eqref{eq:potts-mf} to the convex optimization problem \eqref{eq:relaxed-potts} of multiphase image segmentation, i.e. the convex relaxed Potts model, such that
	%\bq \label{eq:mseg:dual}
	%\max_{{\bf u}, {\bf q}, {\bf p}} \; \langle 1, p_s \rangle - \sum_{i=1}^n I_{\{p_i \leq \rho(l_i,x)\}}(p_i)  -\sum_{i=1}^n I_{\{\|q_i\| \leq \alpha\}}(q_i)
	%\eq
	%subject to the linear equality constraints \eqref{eq:mpf-cond-03}, 
	%where 
	%\bq \label{alm:multi:variables} 
	%{\bf u} = (u_1, ..., u_n)^T \, , \quad {\bf q} = (q_1, ..., q_n)^T\, , \quad {\bf \bar p} = (p_1, ..., p_n)^T\, ; 
	%\eq
	%and the convex constraints \eqref{eq:mpf-cond-03} on all the dual flow variables $ q_{1 ... n}$, $p_{1 ... n}$ and $p_s$ are integrated into the energy function of \eqref{eq:mseg:dual} by their associate indicator functions respectively. 
	%The related augmented Lagrangian function to such linear equality constrained optimization problem \eqref{eq:mseg:dual} is thus formulated as
	%\begin{align}
	%L({\bf u}, {\bf q}, {\bf \bar p}, p_s) \, := \, &\langle 1, p_s \rangle - \sum_{i=1}^n I_{\{p_i \leq \rho(l_i,x)\}}(p_i)  -\sum_{i=1}^n I_{\{\|q_i\| \leq \alpha\}}(q_i) \notag \\
	%&+ \sum_{i=1}^n \langle u_i, \D q_i +p_i -p_s \rangle - \frac{c}{2}\sum_{i=1}^n\|\D q_i + p_i -p_s\|_{2}^2  \, .\label{eq:multi:aug:func:ty}
	%\end{align}
	%where ${\bf u} = (u_1, ..., u)$, ${\bf q} = {q_1, ..., q_n}$ and ${\bf p} = (p_1, ..., p_n)$.
	
	Similar to Remark \ref{remark:two},  with the augmented Lagrangian functional \eqref{eq:multi:aug:func}, the classical ALM framework introduced in \cite{BYT,ybtb2014} for the convex relaxed Potts model \eqref{eq:potts-mf} is equivalent to the following proximal multi-block ADMM method:
	\begin{equation}\label{eq:mADMM:Tai_yuan} 
	\begin{cases}
	q_i^{k+1} = \mathcal{P}_{\alpha}\left( (I-\frac{1}{a}\nabla \nabla ^*)q_i^k  + \frac{1}{a} \nabla(p^k_i -p^k_s -\frac{u_i^k}{c})\right), \ \ i=1, \cdots, n,\\
	p_{i}^{k+1} = \mathcal{P}_{\rho(l_i,x)}(p_{s}^{k} - \D q_i^{k+1}+ \frac{u_i^k}{c}), \ \ i =1, \cdots, n, \\
	p_{s}^{k+1} = \sum_{i=1}^n(p_{i}^k + \D q_i^{k+1} - u_i^k/c)/n+ \frac{1}{nc},  \\%\tag{pADMM-TY} \\
	u_i^k = u_i^k - c(p_i^{k+1} - p_s^{k+1} + \D q_i^{k+1}) \, , \ \ i=1, \cdots, n,
	\end{cases}
	\end{equation}
	This gives the same explanation of the parameter $a$ as in Remark \ref{remark:two}. Clearly, there is no convergence guarantee due to such a sequential multi-block optimization structure \cite{CBHY}! Compared to the iteration \eqref{eq:mADMM:Tai_yuan},  it can be seen that the iteration \eqref{eq:multi:fg:4} is very compact and there is nearly no extra computational effort with convergence guarantee.

	\subsubsection{Relaxed Preconditioned Splitting Method of Douglas-Rachford Type: Multi-phase case}
	For the preconditioned Douglas-Rachford splitting method, we need the saddle-point formulation \eqref{eq:saddle:qr} with the following data for  \eqref{eq:potts-model},
	\bq \label{eq:g:pd:form}
	G(y) = -\langle p_s,1 \rangle +\sum_{i=1}^n I_{\{p_i \leq \rho(l_i,x)\}}(p_i)  +\sum_{i=1}^n I_{\{\|q_i\| \leq \alpha\}}(q_i),
	\eq
	together with% $F(u) =0$ and $Q=0$. the saddle-point formulation for \eqref{eq:potts-model}  can be rewritten as \eqref{eq:saddle:qr}.
	\begin{align}\label{eq:saddle:multi:oper:notion}
	& F(x) =0, \quad x = {\bf u}, \quad y = ({\bf q}, {\bf p} )^{T}, \quad 
	K = \begin{bmatrix}
	B^* \\A^*
	\end{bmatrix}, \ \ K^* = [B,A],
	\end{align}
	where the notations ${\bf u}$, ${\bf q}$, ${\bf p}$ are the same as in \eqref{alm:multi:variables}.
	%\begin{equation}\label{eq:seg:multi:saddle}
	%\min_{u}\max_{y}F(u) + \langle Ku, y\rangle - G(y).
	%\end{equation}
	With the same notations in \eqref{eq:saddle:multi:oper:notion}, \eqref{eq:AB:notion}, \eqref{eq:LI:notion} and \eqref{eq:g:pd:form}, let's first calculate $K^*K$. It can be verified that
	\begin{align}
	K^*K &= [B,A]\begin{bmatrix}
	B^* \\A^*
	\end{bmatrix} = BB^* + AA^* = \text{Diag} \underbrace{[-\Delta, -\Delta, \cdots, -\Delta]}_{n}  \\
	& = \text{Diag} \underbrace{[-\Delta, -\Delta, \cdots, -\Delta]}_{n}  + \text{Diag} \underbrace{[I, I, \cdots, I]}_{n} + \text{Ones(n,n)},
	\end{align}
	where $\text{Ones(n,n)}$ is a $n\times n$ operator matrix with each element being $I$. Solving the linear equation involving with $K^*K$ is very challenging. Efficient preconditioners are of the critical importance. Fortunately, we have the following lemma, which can bring out an efficient preconditioner.
	\begin{Lem}\label{lem:dr:pre:multi}
		We can choose the following $T_0$ as a feasible preconditioner for $K^*K$,
		\[
		T_0  \geq   K^*K, 
		\]	
		where $T_0 = \emph{Diag} \underbrace{[-\Delta + (n+1)I, -\Delta + (n+1)I, \cdots, -\Delta + (n+1)I]}_n$.
	\end{Lem}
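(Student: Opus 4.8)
The plan is to reduce the operator inequality $T_0 \geq K^*K$ to an elementary positive-semidefiniteness statement about the all-ones block matrix, and then verify that statement by a direct quadratic-form computation. First I would substitute the formula for $K^*K$ obtained just above the lemma, namely
\[
K^*K = \text{Diag}\underbrace{[-\Delta, \ldots, -\Delta]}_{n} + \text{Diag}\underbrace{[I,\ldots,I]}_n + \text{Ones}(n,n),
\]
and observe that $T_0$ carries exactly the same block-diagonal Laplacian part. Hence the $-\Delta$ blocks cancel identically and the difference collapses to
\[
T_0 - K^*K = \text{Diag}\underbrace{[(n+1)I - I, \ldots, (n+1)I - I]}_n - \text{Ones}(n,n) = n I_n - \text{Ones}(n,n),
\]
so that the whole lemma becomes equivalent to the single inequality $n I_n \geq \text{Ones}(n,n)$, where $I_n = \text{Diag}[I,\ldots,I]$.

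Next I would establish $n I_n \geq \text{Ones}(n,n)$ by evaluating the associated quadratic form on an arbitrary block vector $\mathbf{v} = (v_1, \ldots, v_n)^T$. Because every entry of $\text{Ones}(n,n)$ is the identity operator on the component space, the $i$-th block of $\text{Ones}(n,n)\mathbf{v}$ equals $\sum_{j} v_j$, and therefore $\inprod{\text{Ones}(n,n)\mathbf{v}, \mathbf{v}} = \norm{\sum_{i=1}^n v_i}^2$. The key estimate is then the Cauchy--Schwarz bound $\norm{\sum_{i=1}^n v_i}^2 \leq n \sum_{i=1}^n \norm{v_i}^2 = n \inprod{\mathbf{v}, \mathbf{v}}$, which yields $\inprod{(n I_n - \text{Ones}(n,n))\mathbf{v}, \mathbf{v}} \geq 0$ for all $\mathbf{v}$ and hence the claimed feasibility $T_0 \geq K^*K$.

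I do not expect any genuine obstacle in this argument. Once the Laplacian terms are cancelled, the remaining statement is a finite-dimensional spectral fact: $\text{Ones}(n,n)$ has operator norm $n$ with the all-equal block as its only nonzero eigendirection, so $n I_n - \text{Ones}(n,n)$ is positive semidefinite with eigenvalues $0$ and $n$. The only point requiring a little care is the bookkeeping with the block, operator-valued structure, specifically that each scalar-looking entry of $\text{Ones}(n,n)$ is the identity $I$ rather than the number $1$, so that the quadratic form produces $\norm{\sum_i v_i}^2$ and the constant $n$ appearing in the estimate is exactly matched by the $(n+1)I$ shift built into $T_0$.
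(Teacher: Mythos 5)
Your proof is correct and follows essentially the same route as the paper: both reduce the claim to a direct quadratic-form estimate in which the $-\Delta$ blocks play no role and the only content is bounding the coupling term $\norm{\sum_i u_i}^2 = \sum_i\norm{u_i}^2 + 2\sum_{i<j}\inprod{u_i,u_j}$ by $n\sum_i\norm{u_i}^2$. The paper carries this out by applying $2u_iu_j \le u_i^2+u_j^2$ to each cross term, which summed over pairs is exactly your Cauchy--Schwarz bound on $\text{Ones}(n,n)$, so the two arguments differ only in packaging.
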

	\begin{proof}
		Actually, we just need to prove that for any ${\bf u}$, 
		\[
		\langle T_0 {\bf u},{\bf u} \rangle \geq \langle {\bf u}, K^*K{\bf u} \rangle.  
		\]
		By direct calculation, we obtain
		\begin{align*}
		\langle {\bf u}, K^*K{\bf u} \rangle & = \langle (u_1, u_2, \cdots, u_n)^{T}, (\sum_{i=1}^nu_i, \sum_{i=1}^nu_i, \cdots,   \sum_{i=1}^n u_i)^{T}\rangle \\
		&+   \langle (u_1, u_2, \cdots, u_n)^{T}, (-\Delta u_1 +u_1,   -\Delta u_2 +u_2, \cdots, -\Delta u_n +u_n )^{T}\rangle \\
		&=\sum_{i=1}^n \langle -\Delta u_i, u_i \rangle + \sum_{i=1}^n \langle u_i,u_i \rangle  + \sum_{j=1}^n(\sum_{i=1}^n u_i)u_j \\
		& = \sum_{i=1}^n \langle (-\Delta +2I) u_i, u_i \rangle + \sum_{j=1}^n \sum_{i=1,i \neq j}^n u_i u_j \\
		& = \sum_{i=1}^n \langle (-\Delta +2I) u_i, u_i \rangle + 2\sum_{j=1}^n \sum_{i=1,i <j}^n u_i u_j  \\
		& \leq  \sum_{i=1}^n \langle (-\Delta +2I) u_i, u_i \rangle + \sum_{j=1}^n \sum_{i=1,i <j}^n (u_i^2 + u_j^2) \\ 
		&= \sum_{i=1}^n \langle (-\Delta +(n+1)I) u_i, u_i \rangle =  \langle T_0 {\bf u}, {\bf u} \rangle.
		\end{align*}
	\end{proof}

	Lemma \ref{lem:dr:pre:multi} can help us to design an efficient preconditioner for $T_0$ instead of $K^*K$. For the preconditioned and relaxed Douglas-Rachford splitting method \eqref{iteration:quadratic}, since $Q=0$ and $f=0$, we have $T_Q = \sigma \tau K^*K$. Supposing $M$ is the symmetric red-black Gauss-Seidel preconditioner for $\sigma \tau T_0$, we have \cite{BS}
	\begin{equation}
	M \geq  \sigma \tau T_0.
	\end{equation}
	Since $\sigma \tau T_0 \geq \sigma \tau K^*K$, we then obtain
	\begin{equation}
	M \geq  \sigma \tau K^*K,
	\end{equation}
	i.e., $M$ is also a feasible preconditioner for $\sigma \tau K^*K$ \cite{BS}. However since $\sigma \tau K^*K$ is not a diagonal operator, designing efficient preconditioners for $\sigma \tau K^*K$ directly is a subtle issue. Fortunately, by \cite{BS}, we can perform the preconditioned iteration in \eqref{iteration:quadratic} as follows,
	\begin{subequations}\label{eq:preconditioned:alternative}
		\begin{align}
		{\bf u}^{k+1} &= {\bf u}^k + M^{-1}[-\sigma K^*\bar y^k - \sigma \tau K^*K {\bf u}^k] \\
		%  & = u^k + M^{-1}[-\sigma K^*\bar y^k -\sigma \tau (K^*K - T_0 )u^k- \sigma \tau T_0 u^k] \\
		&=  {\bf u}^k + M^{-1}[(-\sigma K^*\bar y^k -\sigma \tau (K^*K - T_0 ){\bf u}^k)- \sigma \tau T_0 {\bf u}^k].
		\end{align}
	\end{subequations}
	Furthermore, we denote 
	\begin{equation}
	b^k: = -\sigma K^*y^k -\sigma \tau (K^*K - T_0 ){\bf u}^k.
	\end{equation}
	Then \eqref{eq:preconditioned:alternative} finally becomes the classical preconditioned iteration
	\[
	{\bf u}^{k+1} = {\bf u}^k + M^{-1}[b^k -  \sigma \tau T_0 {\bf u}^k],
	\]
	where $M$ is a preconditioner for the diagonal operator $T_0$ and \eqref{eq:preconditioned:alternative} is one step preconditioned iteration  in computations for dealing with the following modified equation
	\[
	T_0 {\bf u}^{k+1} = b^k.
	\]
	Finally, for the 4 labels case with $n=4$, with these preparations and the use of  \eqref{iteration:quadratic}, we have the following preconditioned and relaxed Douglas-Rachford iterative algorithm: 
	\begin{equation}\label{eq:pdrq:4labels}
	\begin{cases}
	b_i^k  = -\sigma (\D \bar q_i^k + \bar p_i^k - \bar p_s^k) + \sigma \tau (3u_i^k -\sum_{j=1,j\neq i}^4 u_j^k), \quad  i =1, 2, 3,4, \\
	u_i^{k+1}  = u_i^k + M_{u}^{-1}[b_i^k - (5\sigma \tau I -\sigma \tau \Delta)u_i^k], \quad i=1,2,3,4, \\
	q_i^{k+1} = \bar q_i^k - \tau \nabla u_{i}^{k+1}, \quad i=1,2,3,4 \\
	p_i^{k+1} = \bar p_i^k + \tau u_i^{k+1}, \quad i=1,2,3,4, \\
	p_s^{k+1}  = \bar  p_s^k -\tau \sum_{i=1}^4u_i^{k+1}, \quad \\
	\bar q_i^{k+1}   = \bar q_i^k + \rho[\mathcal{P}_{\alpha}(2q_i^{k+1} - \bar q_i^k) - q_i^{k+1} ], \quad i=1,2,3,4, \\
	\bar p_i^{k+1}  = \bar p_i^k + \rho[\mathcal{P}_{\rho(l_i,x)}(2p_i^{k+1} - \bar p_i^k) - p_i^{k+1}], \quad i=1,2,3,4, \\
	\bar p_s^{k+1}  = \bar p_s^k + \rho[(2 p_s^{k+1}- \bar p_s^k +\tau) - p_s^{k+1}].
	\end{cases}
	\end{equation}
	
	%\subsection{First-order Primal-dual Method: Multi-labeling}
	%
	%With the saddle-point formulation, we can write the Chambolle-Pock-type algorithm \eqref{alg:pd} such that
	%
	% as follows \cite{CP}
	%
	%For the 4 labels case with $n=4$, we have
	%\begin{subequations}\label{eq:alg1:4labels}
	%	\begin{align}
	%	y^{k+1}& = (I + \sigma \partial G )^{-1}( y^k + \sigma K  \bar u^k ), \\
	%	q_i^{k+1} &= \mathcal{P}_{\alpha}(q_i^k - \sigma \nabla u_i^k), \quad i=1,2,3,4, \\
	%	p_i^{k+1} & = \mathcal{P}_{\rho(l_i,x)}(p_i^k + \sigma \bar u_i^k ), \quad i=1,2,3,4, \\
	%	p_s^{k+1} &= p_s^k -\sigma \sum_{i=1}^4 \bar u_i^k + \sigma, \quad i=1,2,3,4, \\
	%	u_i^{k+1}& = u_i^k - \tau (\D q_i^{k+1} + p_i^{k+1} - p_s^{k+1}), \quad i=1,2,3,4, \\
	%	\bar u_i^{k+1} &= 2u_i^{k+1} - u_i^k, \quad i=1,2,3,4,
	%	\end{align}
	%\end{subequations}
	%where the step sizes $\sigma$ and $\tau$ satisfy $\sigma \tau \|K^*K\| < 1$. With Lemma \ref{lem:dr:pre:multi}, we see 
	%\[
	%K^*K \leq \|\Delta + 5I\| < 13.
	%\]
	%We thus choose $13\sigma \tau  \leq 1$.
	
	%\input{exp.tex}
	
	\section{Numerical experiments}\label{sec:num:experi}
	
	For all the experiments, we choose $\alpha = 0.5$ for the total variation regularization. The first-order primal-dual algorithm of \cite{CP} is chosen for comparison. The detail of the primal-dual algorithm for the convex relaxed min-cut model \eqref{seg:chanvese} and the convex relaxed Potts model \eqref{eq:relaxed-potts} is given in detail in the Appendix \ref{sec:appendix}. The parameter settings of the algorithms used in our experiments are as follows:
	\begin{itemize}
		\item For ALG1,  %$\mathcal{O}(1/k)$ 
		the primal-dual algorithm
		introduced in \cite{CP} with constant step sizes: $\sigma=0.4$, $\tau = 1/(L^2\sigma)$ with $L = \sqrt{10}$ in \eqref{iteration:pd:seg} for the two-labels case;  $\sigma=0.4$, $\tau = 1/(L^2\sigma)$ with $L = \sqrt{13}$ in \eqref{eq:alg1:4labels}for the four-labels case.
		\item For pADMM-TY, pADMMI, rpADMMI and rpADMMII: we choose $c=0.3$ for both the two labels and four labels cases. Here pADMMI is the preconditioned and relaxed ADMM of Fortin-Glowinksi type in \eqref{eq:update:two:tai:lag} without relaxation, i.e.,  the relaxation parameter $r=1.0$. pADMM-TY denotes the ADMM as in \eqref{eq:ADMM:Tai_yuan} for the two labels case or  \eqref{eq:mADMM:Tai_yuan} for the multi-labeling case.
		\item For rPDRQ: we choose $\sigma = 0.2$, $\tau = 1.0$ in \eqref{iteration:quadratic:seg} for the two-labels case; $\sigma=5$, $\tau=0.4$ for the four-labels case in \eqref{eq:pdrq:4labels}.
	\end{itemize}
	\begin{table}%[!htb]
		\centering % centering table
		\begin{tabular}{lr@{\,}r@{\,}lr@{\,}r@{\,}lr@{\,}r@{\,}lr@{\,}r@{\,}l} % creating ten columns
			\toprule
			%\cmidrule{2-9}
			& \multicolumn{6}{c}{shooter}
			& \multicolumn{6}{c}{yuanbo}\\
			\cmidrule{2-7} %inserting double-line
			\cmidrule{8-13}
			& \multicolumn{3}{c}{$\varepsilon = 10^{-4}$}
			& \multicolumn{3}{c}{$\varepsilon = 10^{-6}$}
			& \multicolumn{3}{c}{$\varepsilon = 10^{-4}$}
			& \multicolumn{3}{c}{$\varepsilon = 10^{-6}$}\\
			\cmidrule{1-13} % inserts single-line
			pADMM-TY && 164&(1.70s) && 3633&(34.97s) &&  240&(80.04)
			&&1480&(498.70s) \\
			ALG1&& 190&(1.93s)  && 4435&(43.40s) && 283&(89.27s) && 1961&(648.33s)\\
			\midrule % inserts single-line
			%	rLADMMI &&130&(1.53s) &&2403&(24.81s) &&189&(63.64s) && 1116&(382.61s)\\
			rpADMMI &&129&(1.54s) &&2403&(24.89s) &&188&(62.61s) && 1111&(377.00s)\\
			rpADMMII &&121&(1.40s) &&2084&(21.94s) &&176&(61.43s) &&1059&(377.28s)\\
			rPDRQ && 94&(1.11s)  && 1722&(18.56s) && 141&(49.34s) && 952&(322.66s)\\
			\bottomrule % inserts single-line
		\end{tabular}
		
		\vspace*{-0.5em}
		\caption{ Numerical results for the TV-regularized image segmentation with regularization parameter
			$\alpha = 0.5$.
			The iteration is performed until the relative error of primal energy
			is below $\varepsilon$. Two labels case.}
		\label{tab:2:labels}
	\end{table}

	\begin{table}%[!htb]
		\centering % centering table
		\begin{tabular}{lr@{\,}r@{\,}lr@{\,}r@{\,}lr@{\,}r@{\,}lr@{\,}r@{\,}l} % creating ten columns
			\toprule
			%\cmidrule{2-9}
			& \multicolumn{6}{c}{brain}
			& \multicolumn{6}{c}{butterfly}\\
			\cmidrule{2-7} %inserting double-line
			\cmidrule{8-13}
			& \multicolumn{3}{c}{$\varepsilon = 10^{-4}$}
			& \multicolumn{3}{c}{$\varepsilon = 10^{-5}$}
			& \multicolumn{3}{c}{$\varepsilon = 10^{-4}$}
			& \multicolumn{3}{c}{$\varepsilon = 10^{-5}$}\\
			\cmidrule{1-13} % inserts single-line
			pADMMI && 504&(39.57s) && 1679&(132.90s) &&  429&(31.61)
			&&1428&(105.45s) \\
			ALG1&& 698&(56.93s)  && 2244&(181.68s) && 644&(48.60s) && 1997&(155.38s)\\
			\midrule % inserts single-line
			rpADMMI &&361&(28.36s) &&1282&(104.22s) &&309&(22.88s) && 1148&(85.02s)\\
			rpADMMII &&323&(29.14s) &&1311&(116.53s) &&280&(24.36s) && 1083&(90.51s)\\
			rPDRQ && 318&(29.89s)  && 1089&(104.84s) && 282&(25.22s) && 948&(83.81s)\\
			\bottomrule % inserts single-line
		\end{tabular}
		
		\vspace*{-0.5em}
		\caption{ Numerical results for the TV-regularized image segmentation model with regularization parameter
			$\alpha = 0.5$.
			The iteration is stopped  when the relative error of primal energy
			is below $\varepsilon$ for this Multi-phase case.}
		\label{tab:4:labels}
	\end{table}

	\begin{figure}%[!htb]
		
		%\graphicspath{{fig//}}
		\begin{center}
			\subfloat[Original image]
			{\includegraphics[width=0.31\textwidth]{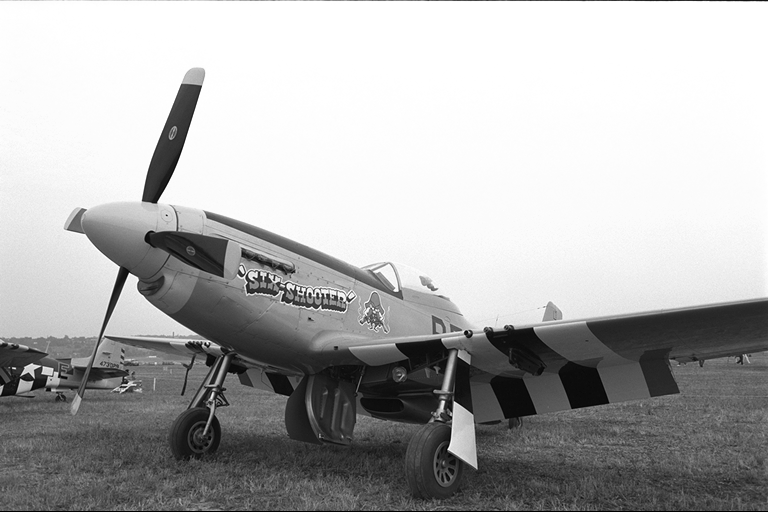}}\ \ \ 
			\subfloat[rpADMMI,  $\varepsilon = 10^{-4}$]
			{\includegraphics[width=0.31\textwidth]{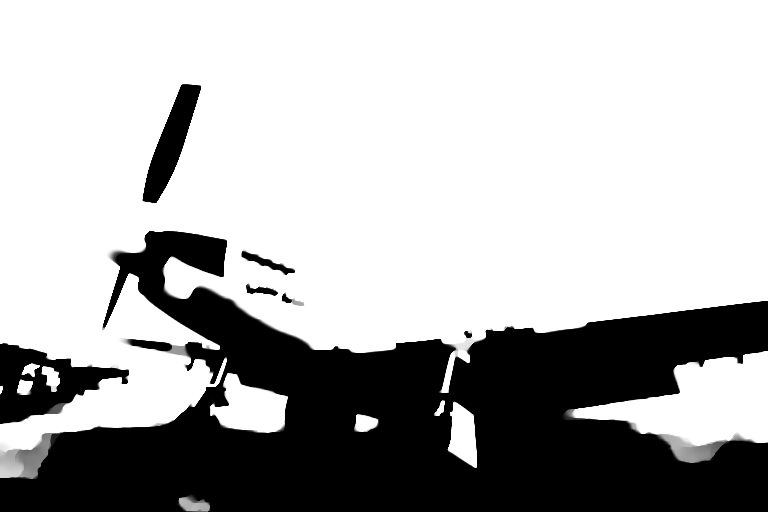}}\ \  \
			\subfloat[rpADMMI, $\varepsilon = 10^{-6}$]
			{\includegraphics[width=0.31\textwidth]{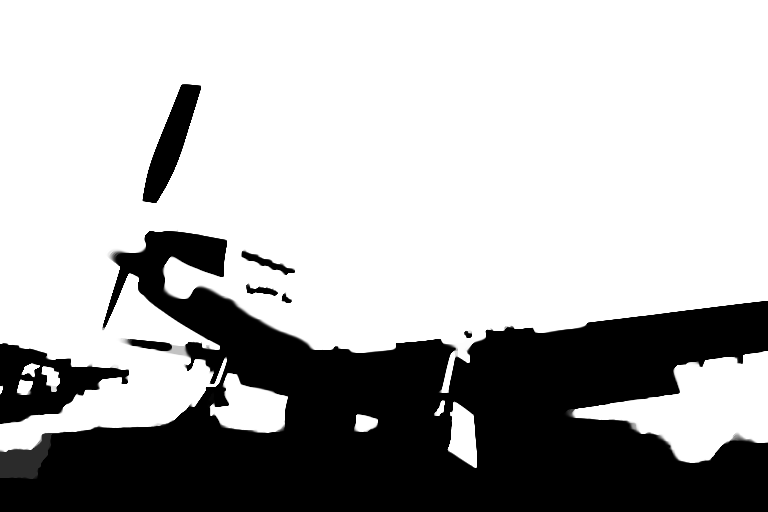}}\\ [-0.1em]
			\subfloat[Original image]
			{\includegraphics[width=0.31\textwidth]{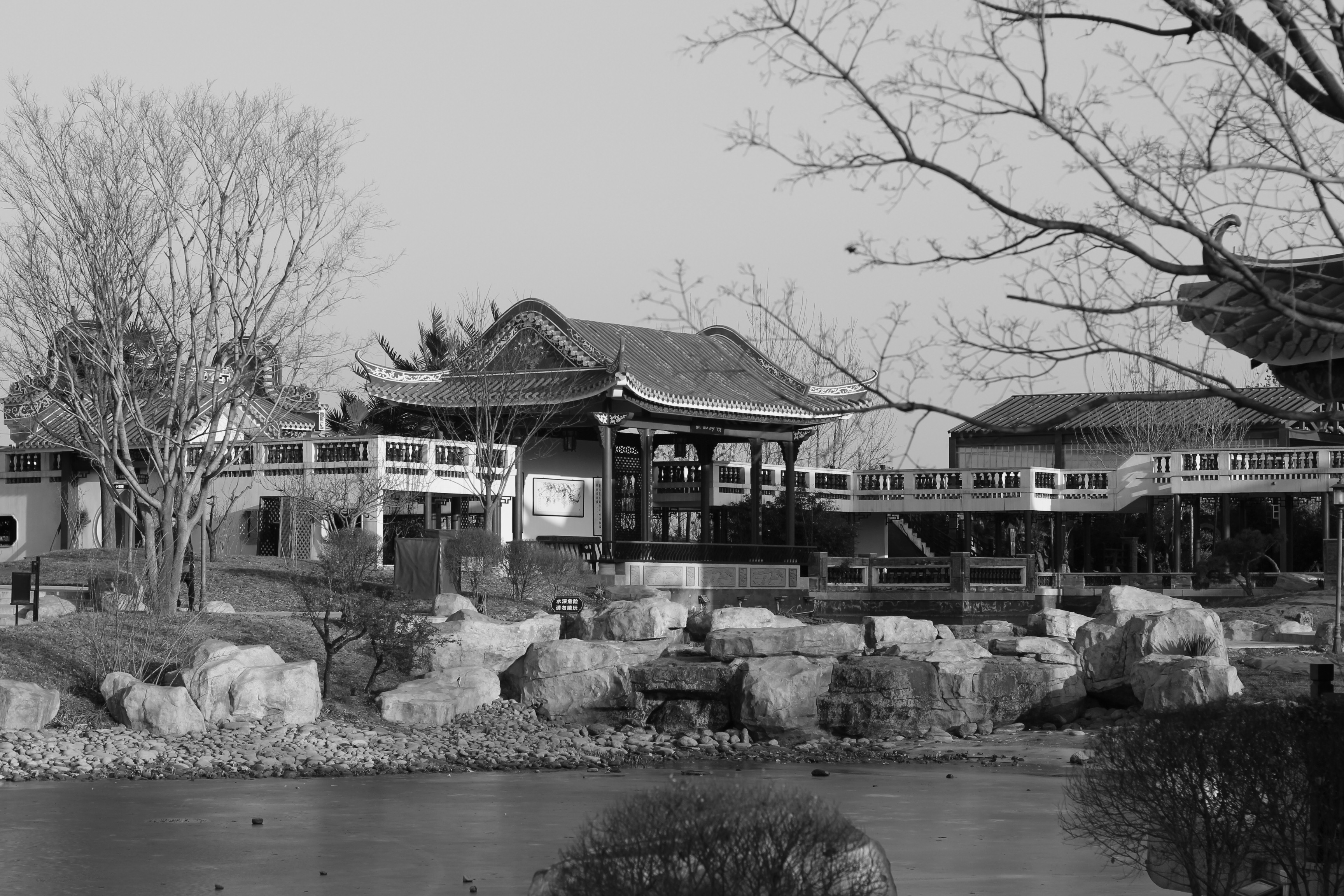}}\ \ \ 
			\subfloat[rpADMMII,  $\varepsilon = 10^{-4}$]
			{\includegraphics[width=0.31\textwidth]{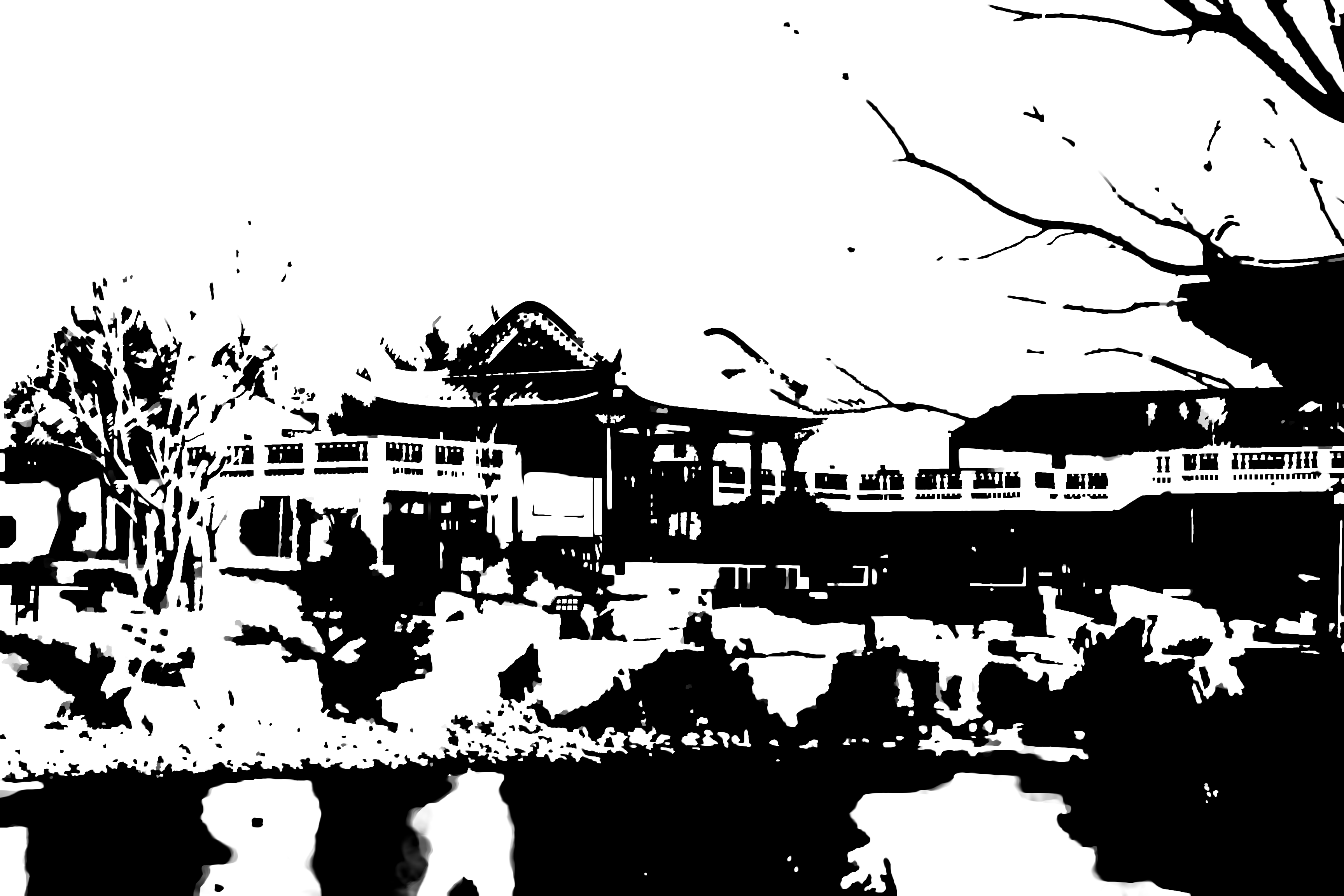}}\ \ \ 
			\subfloat[rpADMMII, $\varepsilon = 10^{-6}$]
			{\includegraphics[width=0.31\textwidth]{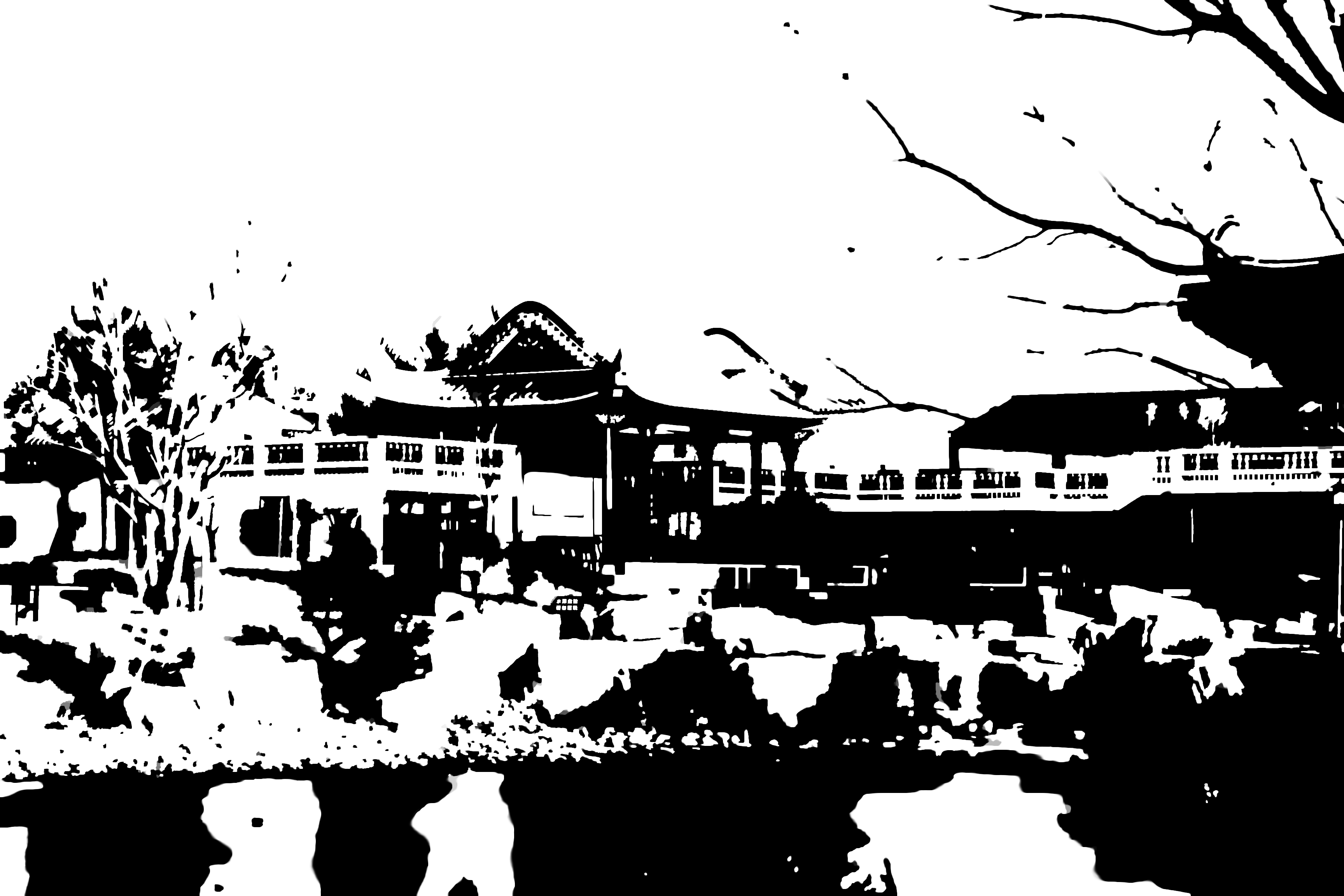}}
		\end{center}
		\vspace*{-0.5em}
		\caption{Results for TV-regularized image segmentation with $\alpha = 0.5$ by rPDRQ. (c) and (f) are the denoised images with $\alpha = 0.5$ 
			respectively.}
		\label{brain:seg:two}
	\end{figure}

	\begin{figure}%[!htb]
		% \graphicspath{{fig//}}
		\begin{center}
			\subfloat[Numerical convergence rate of relative primal energy compared with iteration number.]
			{\includegraphics[width=5.8cm]{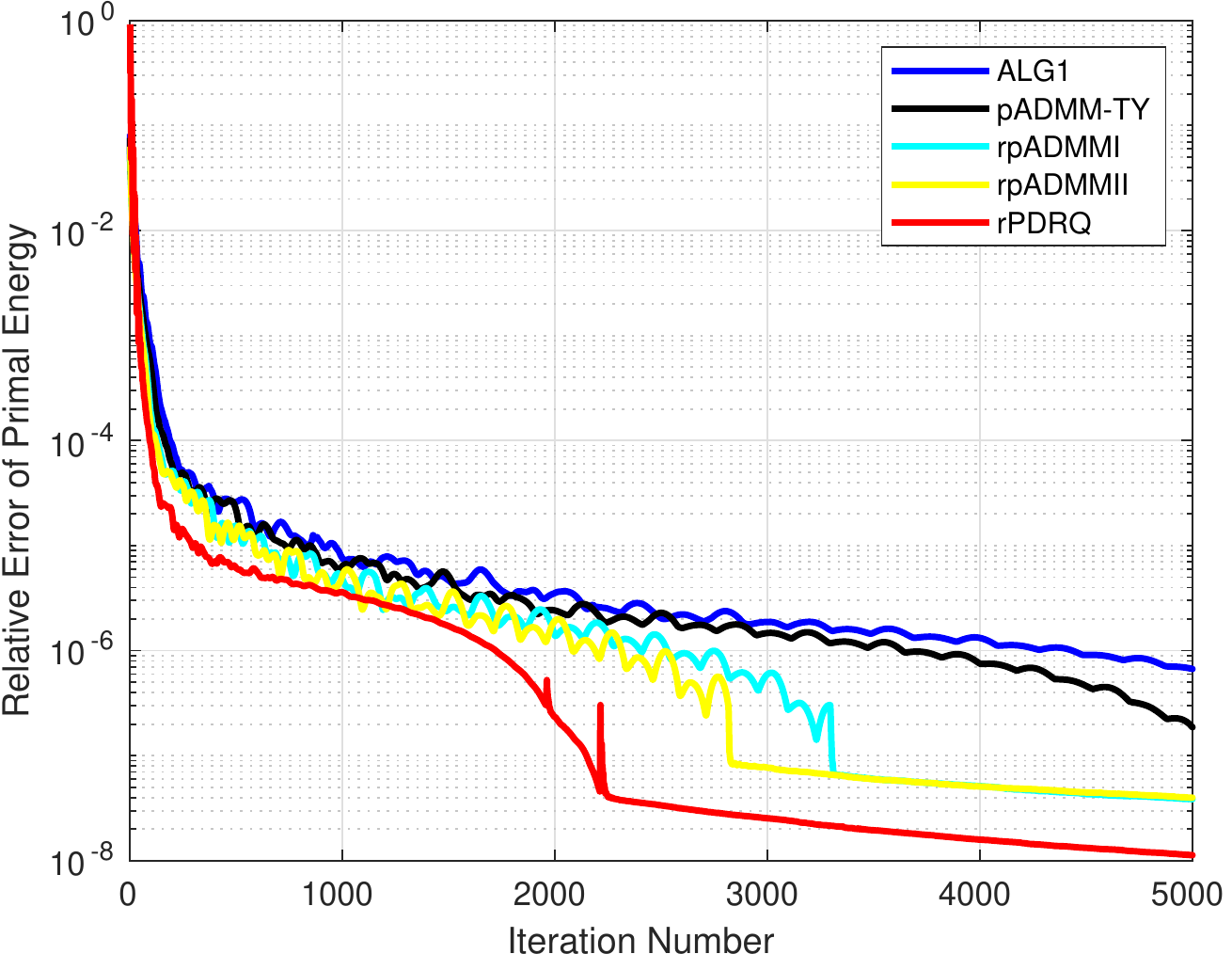}}\hfill
			\subfloat[Numerical convergence rate of relative primal energy 
			compared with iteration time.]
			{\includegraphics[width=5.6cm]{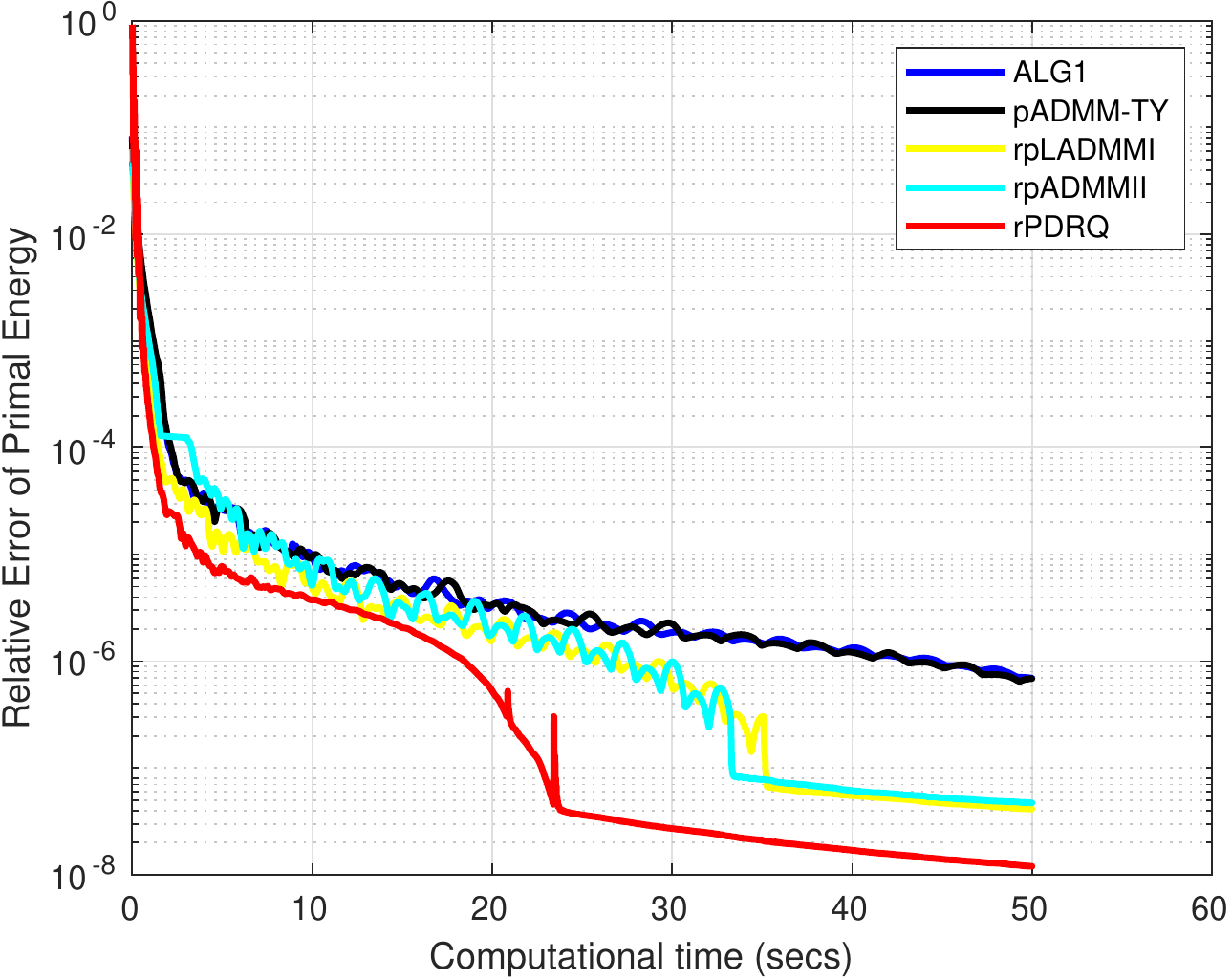}}
		\end{center}
		\caption{$\TV$-regularized image segmentation: numerical convergence rates.
			The relative error of primal energy is compared in terms of iteration
			number and computation time for
			Figure~\ref{tulips:denoise}(d)
			with  $\alpha = 0.5$. Note the 
			semi-logarithmic scale is used in the plot respectively.}
		\label{fig:shooter:performance}
	\end{figure}

	\begin{figure}%[!htb]
		
		%\graphicspath{{fig//}}
		\begin{center}
			\subfloat[Original image]
			{\includegraphics[width=0.31\textwidth]{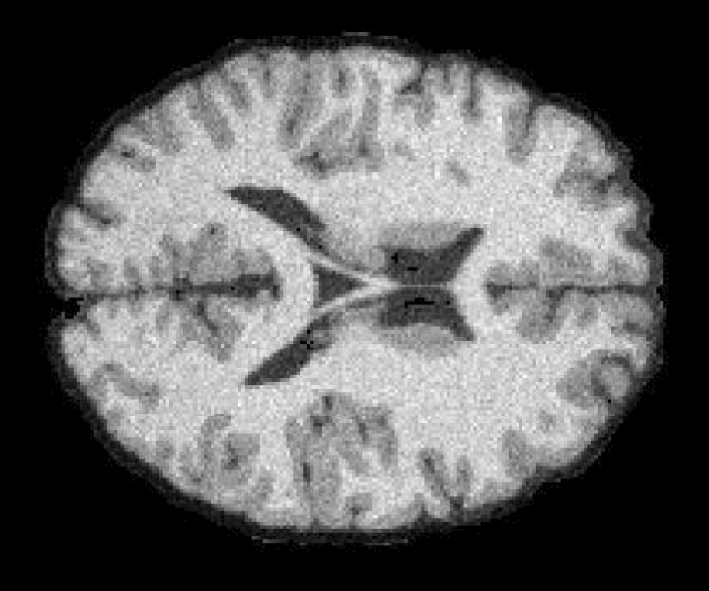}}\ \ \ 
			\subfloat[rpADMMI,  $\varepsilon = 10^{-4}$]
			{\includegraphics[width=0.31\textwidth]{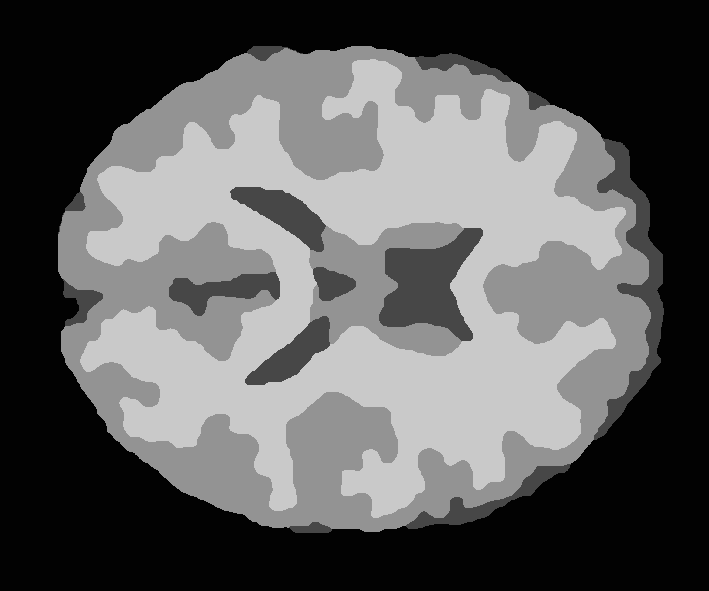}}\ \  \
			\subfloat[rpADMMI, $\varepsilon = 10^{-6}$]
			{\includegraphics[width=0.31\textwidth]{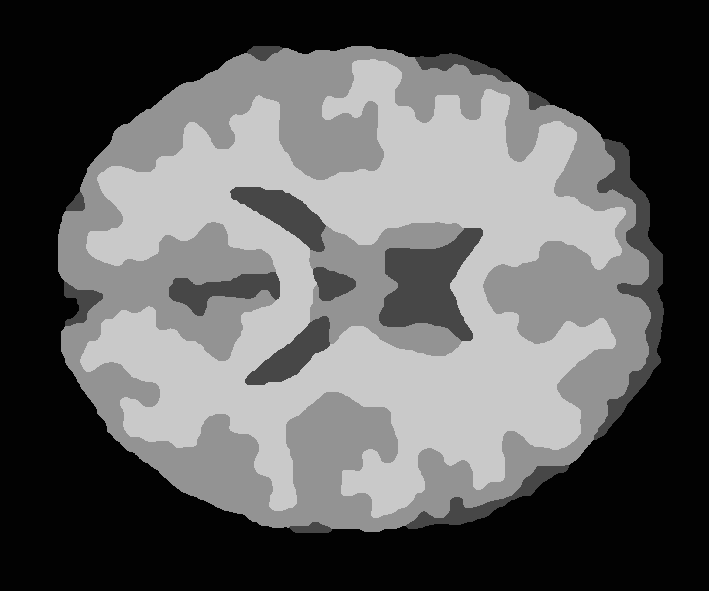}}\\ [-0.1em]
			\subfloat[Original image]
			{\includegraphics[width=0.31\textwidth]{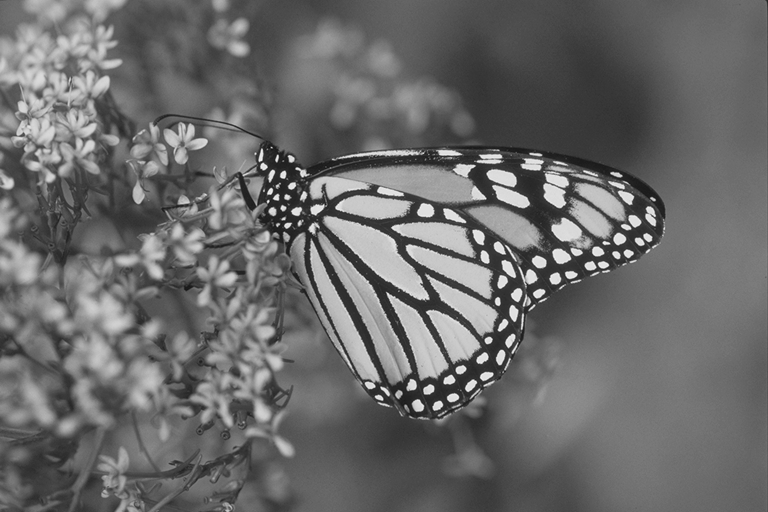}}\ \ \ 
			\subfloat[rpADMMII,  $\varepsilon = 10^{-4}$]
			{\includegraphics[width=0.31\textwidth]{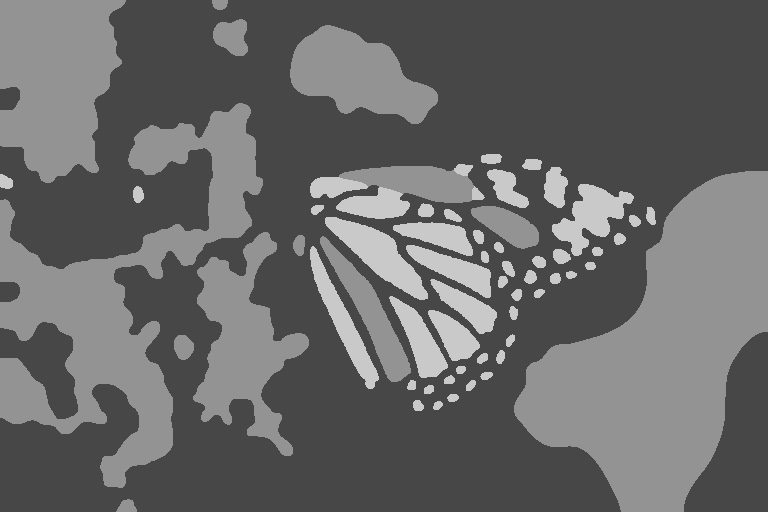}}\ \ \ 
			\subfloat[rpADMMII, $\varepsilon = 10^{-6}$]
			{\includegraphics[width=0.31\textwidth]{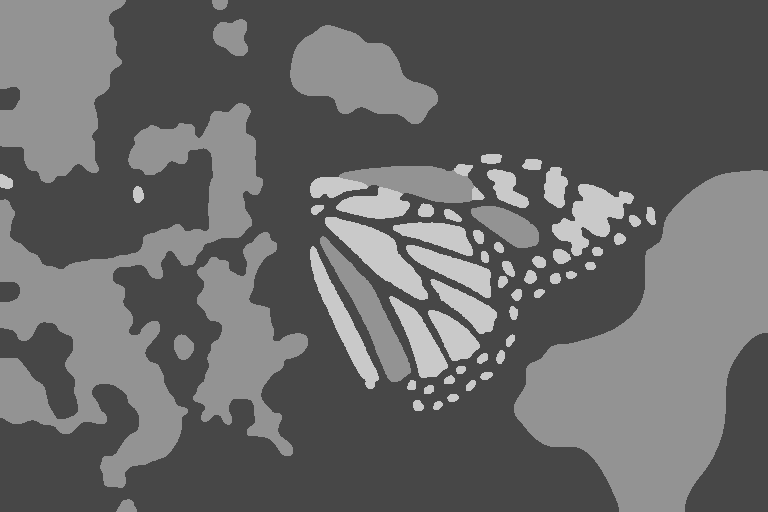}}
		\end{center}
		\vspace*{-0.5em}
		\caption{Results for TV-regularized image segmentation with $\alpha = 0.5$ by rPDRQ. (c) and (f) are the denoised images with $\alpha = 0.5$ 
			respectively.}
		\label{tulips:denoise}
	\end{figure}

	\begin{figure}%[!htb]
		% \graphicspath{{fig//}}
		\begin{center}
			\subfloat[Numerical convergence rate of relative primal energy compared with iteration number.]
			{\includegraphics[width=5.8cm]{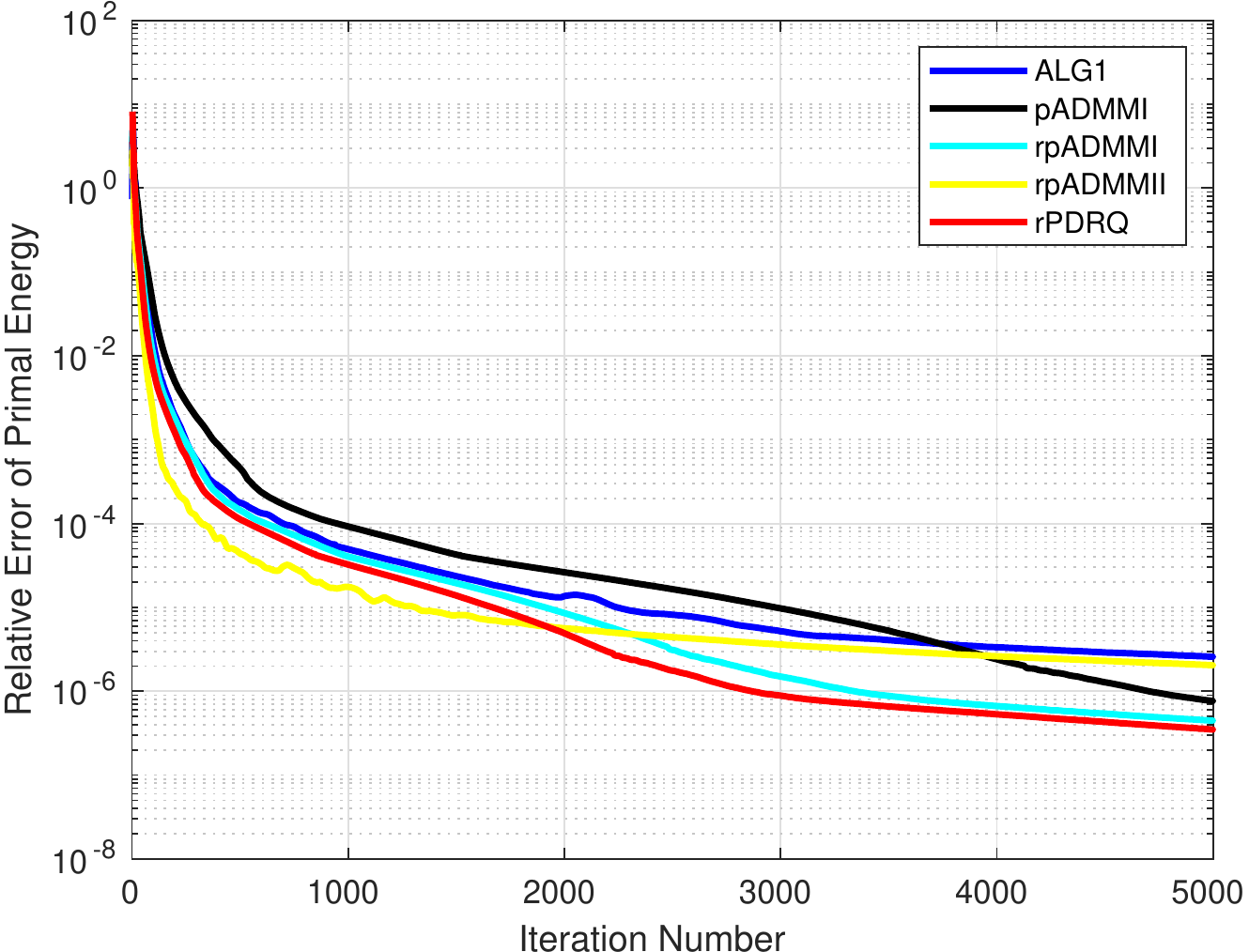}}\hfill
			\subfloat[Numerical convergence rate of relative primal energy gap
			compared with iteration time.]
			{\includegraphics[width=5.8cm]{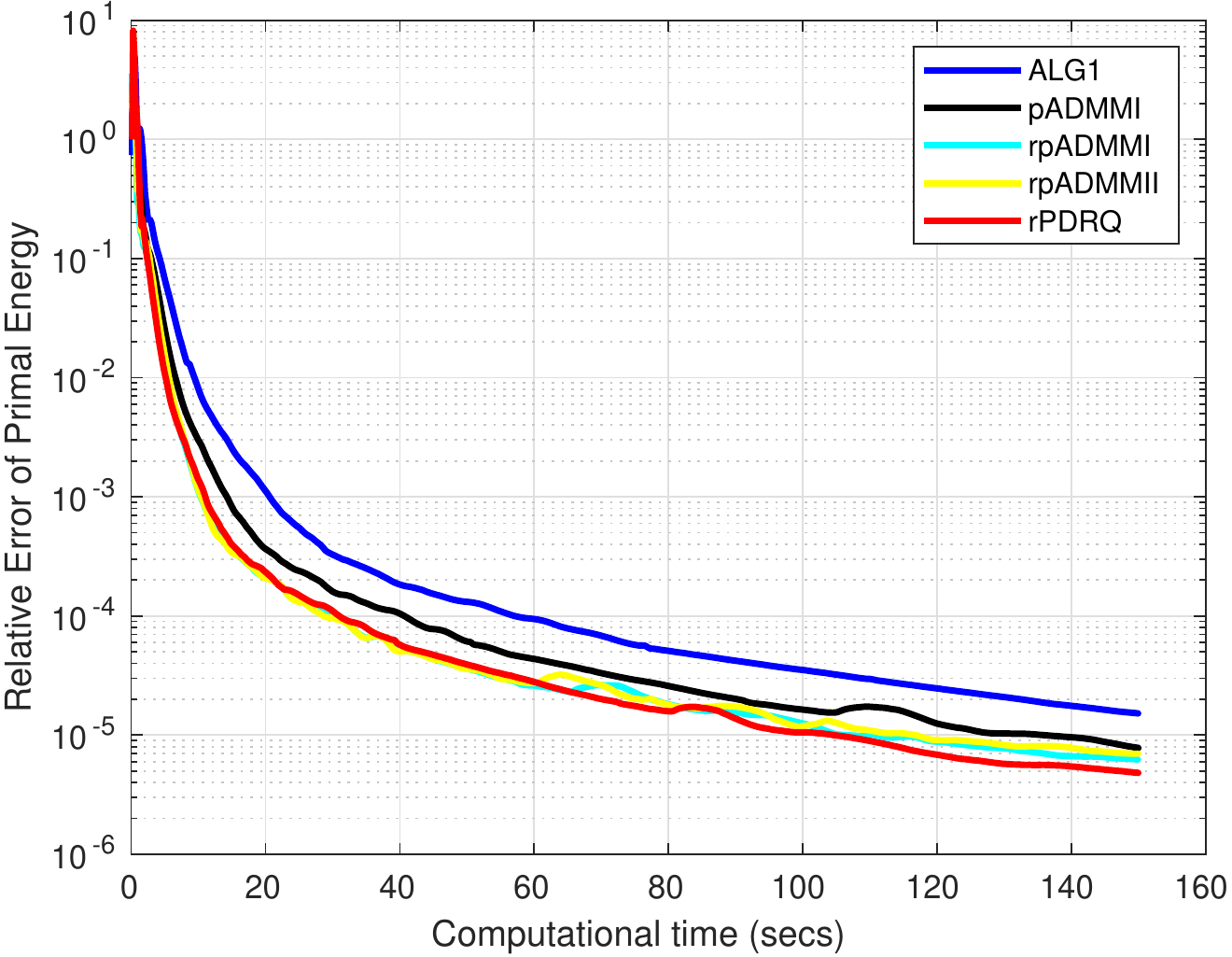}}
		\end{center}
		\caption{$\TV$-regularized image segmentation: numerical convergence rates.
			The relative error of primal energy is compared in terms of iteration
			number and computation time for
			Figure~\ref{tulips:denoise}(d)
			with  $\alpha = 0.5$. Note the 
			semi-logarithmic scale is used in the plot respectively.}
		\label{fig:tulips:multi}
	\end{figure}
	
	Table \ref{tab:2:labels} shows the results with over-relaxation and preconditioning. We can get faster and more efficient algorithms using rpADMMI, rpADMMII which have convergence guarantee, compared to pADMM-TY whose convergence can not be guaranteed. rpADMMI and rpADMMII are 30\% faster than pADMM-TY. It can also be seen that rPDRQ is slightly faster for the two-label segmentation case. Figure \ref{fig:shooter:performance} shows clearly that rpADMMI and rpADMMII are more efficient than pADMM-TY compared both with iteration numbers and computational time costs. Table \ref{tab:4:labels} and Figure \ref{fig:tulips:multi} show that rpADMMI and rpADMMII are very competitive for the 4-labeling case. They seem to be more robust to labels than rPDRQ. 
	
	Compared to \cite{Yuan2010,ybtb2014,YBTB10}, our observations from the numerical tests tell that  the proposed over-relaxed variants of preconditioned ADMM indeed bring out certain accelerations for both two-labeling and four-labeling cases. However, the final segmentation quality seems nearly the same after enough iterations with the same regularization parameters $\alpha$, which is probably due to the convexity of the segmentation models and robustness of the convex optimization algorithms.

	\section{Conclusions}\label{sec:conclusion}
	%In this paper,
	We give a systematic study on the augmented Lagrangian method for continuous maximal flow based image segmentation problems. We developed several novel and efficient preconditioned and over-relaxed ADMMs with convergence guarantee, together with relaxed and preconditioned Douglas-Rachford splitting method. Various efficient block preconditioners are proposed. 
	Numerical tests show that over-relaxed and preconditioned ADMM and Douglas-Rachford splitting methods have
	the potential to bring out appealing benefits and fast algorithms.

	\noindent
	
	\textbf{Acknowledgements}
	H. Sun acknowledges the support of NSF of China under grant No. \,11701563. He also acknowledges the support of Alexander
	von Humboldt Foundation during preparations of this work.
	
	\section{Appendix: Chambolle-Pock Splitting Algorithm} \label{sec:appendix}

	In this section, for completeness, we present the application  of the first-order primal-dual algorithm of \cite{CP} for solving \eqref{eq:two-label-mf} and \eqref{eq:potts-mf}. By the Fenchel's duality theory \cite{KK}, we can consider the equivalent primal-dual formulations for \eqref{eq:two-label-mf} and \eqref{eq:potts-mf} respectively:
	\begin{align}
	\min_u \max_{p_s, p_t, q}&\; \langle 1, p_s \rangle  -{I}_{\{p_s \leq C_s\}}(p_s) - {I}_{\{p_t \leq C_t\}}(p_t) - {I}_{\{\|q\|_{\infty} \leq \alpha\}}(q) \notag \\
	&\; + \langle u, p_t - p_s + \D q\rangle  \, ; \label{eq:pd:2:seg}
	\end{align}
	and 
	\begin{align}%\label{eq:mseg:dual}
	\min_{{\bf u}} \max_{{\bf u}, {\bf q}, {\bf p}}& \; \langle 1, p_s \rangle - \sum_{i=1}^n I_{\{p_i \leq \rho(l_i,x)\}}(p_i)  -\sum_{i=1}^n I_{\{\|q_i\| \leq \alpha\}}(q_i) \notag \\
	&\; + \sum_{i=1}^n \langle u_i, \D q_i +p_i -p_s \rangle \, . \label{eq:pd:m:seg}
	\end{align}
	%which can be simplified as follows:
	%\begin{align}%\label{eq:mseg:dual}
	%\min_{{\bf u}} \max_{{\bf u}, {\bf q}, {\bf p}}& \; \langle 1, p_s \rangle - \sum_{i=1}^n I_{\{p_i \leq \rho(l_i,x)\}}(p_i)  -\sum_{i=1}^n I_{\{\|q_i\| \leq \alpha\}}(q_i) \notag \\
	%&\; + \sum_{i=1}^n \langle {\bf u},  \rangle \, , \label{eq:pd:m:seg}
	%\end{align}
	%\bq %\label{eq:mseg:dual}
	%\max_{{\bf u}, {\bf q}, {\bf p}} \; \langle 1, p_s \rangle - \sum_{i=1}^n I_{\{p_i \leq \rho(l_i,x)\}}(p_i)  -\sum_{i=1}^n I_{\{\|q_i\| \leq \alpha\}}(q_i)
	%\eq
	%\begin{align}
	%&q = (q_1, q_2, \cdots, q_n)^{T}, \ \  p=( p_1, p_2, \cdots, p_n, p_s)^{T}, \ u=(u_1, u_2, \cdots, u_n)^{T},  \label{alm:multi:variables} \\
	%& A = [I_n, -L_n], \quad L_n = \underbrace{[I, I, \cdots, I]^{T}}_n, \quad I_n  = \text{Diag}\underbrace{[I, I, \cdots, I]}_n, \\
	%& B = \text{Diag}\underbrace{[\D, \D, \cdots, \D]}_n, \\
	%& G(p) = -\langle p_s,1 \rangle +\sum_{i=1}^n I_{\{p_i \leq \rho(l_i,x)\}}(p_i), \quad H(q) = \sum_{i=1}^n I_{\{\|q_i\| \leq \alpha\}}(q_i).
	%\end{align} 
	%
	Both primal-dual models \eqref{eq:pd:2:seg} and \eqref{eq:pd:m:seg} generalized in a typical saddle-point optimization form \eqref{eq:saddle:qr},
	%\begin{equation}\label{eq:saddle}
	%\min_{{x}} \max_{{y}} {F}({ x}) + \langle {K}{x, y} \rangle -{G}({y}) \,   ,
	%\end{equation}
	for which Chambolle and Pock proposed an efficient first-order primal-dual algorithm~\cite{CP}:
	%begin{subequations}\label{alg:pd}
	\begin{align}
	y^{k+1}& = (I + \sigma \partial G )^{-1}( y^k + \sigma K  \bar x^k ), \notag \\
	x^{k+1}& = (I + \tau \partial F)^{-1}(x^k - \tau K^* y^{k+1} ), \label{alg:pd} \\
	\bar x^{k+1} &= 2x^{k+1} - x^k\, , \notag
	\end{align}
	and showed that $\sigma \tau \|{K}^* {K}\| < 1$ is required for the convergence of such iterative primal-dual scheme~\cite{CP}.
	
	For the primal-dual formulation \eqref{eq:pd:2:seg} of foreground-background image segmentation, we consider the saddle-point structure  \eqref{eq:saddle:qr} with the data \eqref{eq:pdrq:data:f} and \eqref{eq:pdrq:data:g}.
	%\[
	%{F}(x) = 0,  \quad x ={\bf u} , \quad {K} = \begin{pmatrix}-\nabla \\ I \\ -I \end{pmatrix}, \quad y = \begin{pmatrix}q \\p_t\\ p_s\end{pmatrix} \, , 
	%\]
	%and 
	%\[
	%{G}(y) = - \langle 1, p_s \rangle +  {I}_{\{p_s \leq C_s\}}(p_s) + {I}_{\{p_t \leq C_t\}}(p_t) + {I}_{\{\|q\|_{\infty} \leq \alpha\}}(q) \, .
	%\]
	Thus, as the proposed Chambolle-Pock primal-dual algorithm \eqref{alg:pd}, the primal-dual optimization problem \eqref{eq:pd:2:seg}
	can be directly solved by the following algorithm:
	\begin{equation}\label{iteration:pd:seg}
	\begin{cases}
	q^{k+1} = \mathcal{P}_{\alpha}(q^k - \sigma \nabla \bar u^k), \\
	p_{t}^{k+1} = \mathcal{P}_{C_t}(\bar p_{t}^k + \sigma \bar u^{k}),  \\
	p_{s}^{k+1} = \mathcal{P}_{C_s}(\bar p_{s}^k - \sigma \bar u^{k}),\\
	u^{k+1} =  u^k - \tau (\D q^{k+1} + p_{t}^{k+1} - p_{s}^{k+1}), \\
	\bar u^{k+1} = 2 u^{k+1} - u^k,
	\end{cases}
	\end{equation}
	where $\sigma \tau \|{K}^* {K}\| < 1$ with $K$ defined in \eqref{eq:pdrq:data:f} is required for convergence~\cite{CP}. In view of
	\[ 
	\|{K}^* {K}\|  \leq ( \|\nabla^* \nabla  \| + 2) < 10 \, ,
	\]
	we see that $\sigma \tau \leq 1/10$. 
	
	For the primal-dual model \eqref{eq:pd:m:seg} of multiphase image segmentation and the given notations \eqref{alm:multi:variables}, we study \eqref{eq:saddle:qr} with \eqref{eq:saddle:multi:oper:notion}, \eqref{eq:g:pd:form} and the same 
	%\begin{align}\label{eq:saddle:multi:oper:notion}
	%& F(x) =0, \quad x = {\bf u}, \quad y = ({\bf q}, {\bf p} )^{T}, \quad 
	%K = \begin{bmatrix}
	%B^* \\A^*
	%\end{bmatrix}, \ \ K^* = [B,A],
	%\end{align}
	%and
	%\bq \label{eq:g:pd:form}
	%G(y) = -\langle p_s,1 \rangle +\sum_{i=1}^n I_{\{p_i \leq \rho(l_i,x)\}}(p_i)  +\sum_{i=1}^n I_{\{\|q_i\| \leq \alpha\}}(q_i),
	%\eq
	linear operators $A$, $B$  as in \eqref{eq:AB:notion} and \eqref{eq:LI:notion}.
	Also, using the proposed Chambolle-Pock primal-dual algorithm \eqref{alg:pd},
	we have the Chambolle-Pock type primal-dual algorithm as follows:
	
	%For the 4 labels case with $n=4$, we have
	\begin{equation}\label{eq:alg1:4labels}
	\begin{cases}
	y^{k+1} = (I + \sigma \partial G )^{-1}( y^k + \sigma K  \bar u^k ), \\
	q_i^{k+1} = \mathcal{P}_{\alpha}(q_i^k - \sigma \nabla u_i^k), \quad i=1 ... n, \\
	p_i^{k+1}  = \mathcal{P}_{\rho(l_i,x)}(p_i^k + \sigma \bar u_i^k ), \quad i=1 ... n, \\
	p_s^{k+1} = p_s^k -\sigma \sum_{i=1}^n \bar u_i^k + \sigma,  \\
	u_i^{k+1} = u_i^k - \tau (\D q_i^{k+1} + p_i^{k+1} - p_s^{k+1}), \quad i=1 ... n, \\
	\bar u_i^{k+1} = 2u_i^{k+1} - u_i^k, \quad i=1 ... n,
	\end{cases}
	\end{equation}
	where the step sizes $\sigma$ and $\tau$ satisfy $\sigma \tau \|K^*K\| < 1$  with $K$ defined in \eqref{eq:saddle:multi:oper:notion}. With Lemma \ref{lem:dr:pre:multi}, we see 
	\[
	K^*K \leq \|\Delta + (n+1)I\| < 13.
	\]
	We thus choose $(9+n)\sigma \tau  \leq 1$.
	
	The first-order primal-dual is a popular first order algorithm for a lot of imaging applications. For numerical comparison, we present the above details.


\begin{thebibliography}{99}
		
		%\bibitem{CP0} A. Chambolle,  \emph{An algorithm for total variation minimization and applications},  J. Math. Imaging and Vis., 2004, 
		%20(1-2), pp. 89--97.
		
		\bibitem{CP} A. Chambolle, T. Pock, \emph{A first-order primal-dual algorithm for convex problems with applications to imaging},  J. Math. Imaging and Vis., 2010, 
		40(1), pp. 120--145.
		
		\bibitem{BYT} E. Bae, J. Yuan, XC. Tai, \emph{Global minimization for continuous multiphase partitioning problems using a dual approach}, Int. J. Comput Vis., 92(1), pp. 112--129, 2011.
		
		
		\bibitem{Beck2009A}
		A. Beck, M. Teboulle,
		\newblock {\it A fast iterative shrinkage-thresholding algorithm for linear inverse
			problems},
		\newblock {SIAM Journal on Imaging Sciences}, 2(1):183--202, 2009.
		
		
		\bibitem{citeulike1859441}
		D.~P. Bertsekas,
		\newblock {Nonlinear Programming},
		\newblock {Athena Scientific}, September 1999.
		
		%\bibitem{BVZ} Y. Boykov, O. Veksler, R. Zabih, \emph{Fast approximate energy minimization via graph cuts},  IEEE Trans.
		%Pattern Anal. Mach. Intell. 23, pp. 1222--1239 (2001).
		
		\bibitem{Boykov01fastapproximate}
		Y. Boykov, O. Veksler, R. Zabih,
		\newblock {\it  Fast approximate energy minimization via graph cuts}, 
		\newblock {IEEE Transactions on Pattern Analysis and Machine Intelligence},
		23:2001, 2001.
		
		\bibitem{Boykov01anexperimental}
		Y. Boykov, V. Kolmogorov,
		\newblock {\it An experimental comparison of min-cut/max-flow algorithms for energy
			minimization in vision},
		\newblock {IEEE Transactions on Pattern Analysis and Machine Intelligence},
		26:359--374, 2001.
		
		\bibitem{BS0} K. Bredies, H. Sun, \emph{A proximal point analysis of the preconditioned alternating direction method of multipliers}, J. Optim. Theory Appl. 173(3), pp. 878--907, 2017.
		
		\bibitem{BS} K. Bredies, H. Sun, \emph{Preconditioned Douglas-Rachford algorithms for TV and TGV regularized variational imaging problems}, J. Math. Imaging Vis., 2015, 52(3), pp. 317--344, doi {10.1007/s10851-015-0564-1}.
		
		
		\bibitem{BS1} K. Bredies, 	V. Horak, H. Sun, \emph{A unified analysis for relaxed and inertial variants of
			preconditioned Douglas–Rachford algorithms}, to appear, 2019. 
		
		
		\bibitem{citeulike3001108}
		A. Chambolle,
		\newblock {\it An algorithm for total variation minimization and applications},
		\newblock { J. Math. Imaging Vis.}, 20(1):89--97,
		January 2004.
		
		\bibitem{ChambolleP11}
		A. Chambolle, T. Pock,
		\newblock {\it A first-order primal-dual algorithm for convex problems with
			applications to imaging},
		\newblock {J. Math. Imaging Vis.}, 40(1):120--145,
		2011.
		
		
		\bibitem{Nikolova2006}
		Tony~F. Chan, S. Esedo{\=g}lu, M. Nikolova,
		\newblock {\em Algorithms for finding global minimizers of image segmentation and
			denoising models},
		\newblock {SIAM J. Appl. Math.}, 66(5):1632--1648, 2006.
		\bibitem{CBHY} C. Chen, B. He, Y. Ye, X. Yuan, \emph{The direct extension of ADMM for multi-block convex minimization problems is not necessarily convergent},  Math. Program.,  2016, 155(1-2), pp. 57--79.
		
		%
		%\bibitem{CV} T.F., Chan, S. Esedoglu, M. Nikolova, \emph{Algorithms for finding global minimizers of image segmentation and denoising models}, SIAM J. Appl. Math. 66(5), pp. 1632--1648, 2006.
		
		\bibitem{DY}  W. Deng, W. Yin, \emph{On the global and linear convergence of the generalized alternating direction method of multipliers}, J. Sci. Comput., 66(3), 2016, pp. 889--916.
		
		
		
		\bibitem{EP} {J. Eckstein, D. P. Bertsekas}, {\it On the Douglas--Rachford splitting method and the proximal algorithm for maximal monotone operators}, Math. Program., 55, (1992), pp. 293--318.
		
		
		
		
		\bibitem{FG} {M. Fortin, R. Glowinski}, {\it On decomposition-coordination methods using an augmented Lagrangian}, in: M. Fortin
		and R. Glowinski, eds., { Augmented Lagrangian Methods: Applications to the Solution of Boundary Value Problems}, North-Holland, Amsterdam, 1983.
		
		\bibitem{goldstein2014fast}
		T. Goldstein, S. Osher,
		\newblock {\em The split bregman method for l1 regularized problems}, 
		\newblock {SIAM Journal on Imaging Sciences}, 2(2):323--343, 2009.
		
		\bibitem{He2002A}
		B. He, L. Liao, D. Han, H. Yang,
		\newblock {\em A new inexact alternating directions method for monotone variational
			inequalities},
		\newblock {Math. Program.}, 92(1):103--118, 2002.
		
		\bibitem{KK} {K. Ito, K. Kunisch}, { Lagrange Multiplier Approach to Variational Problems and Applications}, Advances in design and control 15, Philadelphia, SIAM, 2008.
		
		
		\bibitem{iwr_08}
		J. Lellmann, J. Kappes, J. Yuan, F. Becker, C.
		Schn\"{o}rr,
		\newblock Convex multi-class image labeling by simplex-constrained total
		variation.
		\newblock In {SSVM '09}, pp. 150--162, 2009.
		
		
		\bibitem{LST} {M. Li, D. Sun, K. C. Toh}, {\it A majorized ADMM with
			indefinite proximal terms for linearly constrained convex composite
			optimization}, SIAM J.~Optim. 26(2), 922--950, 2016.
		
		\bibitem{Nesterov2005Smooth}
		Yu. Nesterov,
		\newblock {\em Smooth minimization of non-smooth functions},
		\newblock {Math. Program.}, 103(1):127--152, 2005.
		
		\bibitem{Rockafellar1976}
		R.~T. Rockafellar,
		\newblock {\em Augmented {L}agrangians and applications of the proximal point
			algorithm in convex programming},
		\newblock { Math. Oper. Res.}, 1(2):97--116, 1976.
		
		
		\bibitem{rudin1992nonlinear}
		L.~Rudin, S.~Osher, E.~Fatemi,
		\newblock {\em Nonlinear total variation based noise removal algorithms}.
		\newblock { Physica D}, 60(1-4):259--268, 1992.
		
		\bibitem{SUN} H. Sun, \emph{Analysis of fully preconditioned alternating direction method of multipliers with relaxation in Hilbert spaces}, J. Optim. Theory Appl., 2019, pp. 1--31, https://doi.org/10.1007/s10957-019-01535-6. 
		
		
		
		% Jing's references
		

		
		%\bibitem{YBTB} J. Yuan, E. Bae, XC. Tai, Y. Boykov, \emph{A spatially continuous max-flow and min-cut framework for binary labeling problems}, Numerische Mathematik, 2014, 126(3), pp. 559--587. 
		
		\bibitem{Yuan2010}
		J. Yuan, E. Bae, XC. Tai,
		\newblock {\em A study on continuous max-flow and min-cut approaches,}
		\newblock In { IEEE Conference on Computer Vision and Pattern Recognition
			(CVPR), 2010}.
		
		\bibitem{yuan2007discrete}
		J. Yuan, C. Schn{\"o}rr, E. M{\'e}min,
		\newblock {\em Discrete orthogonal decomposition and variational fluid flow
			estimation}, 
		\newblock {J. Math. Imaging Vis.}, 28(1):67--80, 2007.
		
		
		
		
		
		
		
		%\bibitem{punithakumar2012convex}
		%Kumaradevan Punithakumar, Jing Yuan, Ismail~Ben Ayed, Shuo Li, and Yuri Boykov.
		%\newblock A convex max-flow approach to distribution-based figure-ground
		%  separation.
		%\newblock {\em SIAM Journal on Imaging Sciences}, 5(4):1333--1354, 2012.
		
		%\bibitem{MR0274683}
		%R.~Tyrrell Rockafellar.
		%\newblock {\em Convex analysis}.
		%\newblock Princeton Mathematical Series, No. 28. Princeton University Press,
		%  Princeton, N.J., 1970.
		
		
		
		
		%\bibitem{He2012On}
		%Bingsheng He and Xiaoming Yuan.
		%\newblock On the $o(1/n)$ convergence rate of the douglas-rachford alternating
		%  direction method.
		%\newblock {\em Siam Journal on Numerical Analysis}, 50(2):700--709, 2012.
		
		
		
		
		
		\bibitem{ybtb2014}
		J. Yuan, E. Bae, XC. Tai, Y. Boykov,
		\newblock A spatially continuous max-flow and min-cut framework for binary
		labeling problems.
		\newblock { Numerische Mathematik}, 126(3):559--587, 2014.
		
		\bibitem{YBTB10}
		J. Yuan, E. Bae, XC. Tai, Y. Boykov,
		\newblock A continuous max-flow approach to potts model.
		\newblock In { ECCV}, 2010.
		
		\bibitem{zhang2010analysis}
		T. Zhang,
		\newblock Analysis of multi-stage convex relaxation for sparse regularization.
		\newblock {The Journal of Machine Learning Research}, 11:1081--1107, 2010.
		
		%\bibitem{yuan2013efficient}
		%Jing Yuan, Wu~Qiu, Martin Rajchl, Eranga Ukwatta, Xue-Cheng Tai, and Aaron
		%  Fenster.
		%\newblock Efficient 3d endfiring trus prostate segmentation with globally
		%  optimized rotational symmetry.
		%\newblock In {\em IEEE Conference on Computer Vision and Pattern Recognition
		%  (CVPR), 2013}, pages 2211--2218, 2013.
		
		%\bibitem{qiu2014prostate}
		%Wu~Qiu, Jing Yuan, Eranga Ukwatta, Yue Sun, Martin Rajchl, and Aaron Fenster.
		%\newblock Prostate segmentation: An efficient convex optimization approach with
		%  axial symmetry using 3d trus and mr images.
		%\newblock {\em IEEE Transactions on Medical Imaging}, 33(4):1--14, 2014.
		%
		%\bibitem{Ukwatta2015120}
		%Eranga Ukwatta, Jing Yuan, Wu~Qiu, Martin Rajchl, Bernard Chiu, and Aaron
		%  Fenster.
		%\newblock Joint segmentation of lumen and outer wall from femoral artery \{MR\}
		%  images: Towards 3d imaging measurements of peripheral arterial disease.
		%\newblock {\em Medical Image Analysis}, 26(1):120 -- 132, 2015.
		%
		%\bibitem{Ukwatta2012}
		%Eranga Ukwatta, Jing Yuan, Martin Rajchl, and Aaron Fenster.
		%\newblock Efficient global optimization based 3d carotid ab-lib mri
		%  segmentation by simultaneously evolving coupled surfaces.
		%\newblock In {\em MICCAI 2012}, volume 7512 of {\em Lecture Notes in Computer
		%  Science}, pages 377--384, 2012.
		%
		%\bibitem{ukwatta20133}
		%Eranga Ukwatta, Jing Yuan, Martin Rajchl, Wu~Qiu, David Tessier, and Aaron
		%  Fenster.
		%\newblock 3-d carotid multi-region mri segmentation by globally optimal
		%  evolution of coupled surfaces.
		%\newblock {\em Medical Imaging, IEEE Transactions on}, 32(4):770--785, 2013.
		%
		%\bibitem{rajchl2012fast}
		%M~Rajchl, J~Yuan, E~Ukwatta, and TM~Peters.
		%\newblock Fast interactive multi-region cardiac segmentation with linearly
		%  ordered labels.
		%\newblock In {\em ISBI 2012}, pages 1409--1412. IEEE Conference Publications,
		%  2012.
		%
		%\bibitem{rajchl2014interactive}
		%M.~Rajchl, J.~Yuan, J.~White, E.~Ukwatta, J.~Stirrat, C.~Nambakhsh, F.~Li, and
		%  T.~Peters.
		%\newblock Interactive hierarchical max-flow segmentation of scar tissue from
		%  late-enhancement cardiac mr images.
		%\newblock {\em IEEE Transactions on Medical Imaging}, 33(1):159--172, 2014.
		%
		%\bibitem{Qiu2012}
		%Wu~Qiu, Jing Yuan, Eranga Ukwatta, David Tessier, and Aaron Fenster.
		%\newblock Rotational-slice-based prostate segmentation using level set with
		%  shape constraint for 3{D} end-firing {TRUS} guided biopsy.
		%\newblock In {\em MICCAI(Part 1), LNCS 7510}, pages 536--543, 2012.
		%
		%\bibitem{yuan2012efficient}
		%Jing Yuan, Wu~Qiu, Eranga Ukwatta, Martin Rajchl, Yue Sun, and Aaron Fenster.
		%\newblock An efficient convex optimization approach to 3d prostate mri
		%  segmentation with generic star shape prior.
		%\newblock In {\em Prostate MR Image Segmentation Challenge, MICCAI}, pages
		%  13--24, 2012.
		%
		%\bibitem{Guo2015Globally}
		%F.~Guo, J.~Yuan, M~Rajchl, S~Svenningsen, D.~P. Capaldi, K~Sheikh, A~Fenster,
		%  and G~Parraga.
		%\newblock Globally optimal co-segmentation of three-dimensional pulmonary 1h
		%  and hyperpolarized 3he mri with spatial consistence prior.
		%\newblock {\em Medical Image Analysis}, 23(1):43--55, 2015.
		%
		%%\bibitem{YUQRA2013}
		%%Jing Yuan, Eranga Ukwatta, Wu~Qiu, Martin Rajchl, Yue Sun, Xue-Cheng Tai, and
		%%  Aaron Fenster.
		%%\newblock Jointly segmenting prostate zones in 3d mris by globally optimized
		%%  coupled level-sets.
		%%\newblock In {\em Energy Minimization Methods in Computer Vision and Pattern
		%%  Recognition}, pages 12--25. Springer Berlin Heidelberg, EMMCVPR 2013.
		%
		%\bibitem{raey}
		%Wu~Qiu, Martin Rajchl, Fumin Guo, Yue Sun, Eranga Ukwatta, Aaron Fenster, and
		%  Jing Yuan.
		%\newblock 3d prostate trus segmentation using globally optimized
		%  volume-preserving prior.
		%\newblock In {\em MICCAI 2014}, pages 796--803, 2014.
		
		%\bibitem{Nambakhsh20131010}
		%Cyrus~M.S. Nambakhsh, Jing Yuan, Kumaradevan Punithakumar, Aashish Goela,
		%  Martin Rajchl, Terry~M. Peters, and Ismail~Ben Ayed.
		%\newblock Left ventricle segmentation in \{MRI\} via convex relaxed
		%  distribution matching.
		%\newblock {\em Medical Image Analysis}, 17(8):1010 -- 1024, 2013.
		
		%\bibitem{Bae2010b}
		%E.~Bae, J.~Yuan, X.-C. Tai, and Y.~Boykov.
		%\newblock A study on continuous max-flow and min-cut approaches. {P}art {II}:
		%  Multiple linearly ordered labels.
		%\newblock Technical report CAM-10-62, UCLA, 2010.
		%
		%\bibitem{Rajchl2012A}
		%Martin Rajchl, Yuan Jing, James~A. White, Cyrus M.~S. Nambakhsh, Eranga
		%  Ukwatta, Li~Feng, John Stirrat, and Terry~M. Peters.
		%\newblock {\em A fast convex optimization approach to segmenting 3D scar tissue
		%  from delayed-enhancement cardiac MR images.}
		%\newblock Springe, 2012.
		%
		%\bibitem{Baxter2017Directed}
		%John S.~H. Baxter, Martin Rajchl, A.~Jonathan Mcleod, Jing Yuan, and Terry~M.
		%  Peters.
		%\newblock Directed acyclic graph continuous max-flow image segmentation for
		%  unconstrained label orderings.
		%\newblock {\em International Journal of Computer Vision}, 123(3):1--20, 2017.
		
	\end{thebibliography}
\end{document}